\title{Higher localized analytic indices and strict deformation quantization}
\author{Paulo Carrillo Rouse}
\begin{document}


{\theoremstyle{definition}\newtheorem{definition}{Definition}[section]
\newtheorem{notation}[definition]{Notation}
\newtheorem{remnot}[definition]{Remarks and notation}
\newtheorem{terminology}[definition]{Terminology}
\newtheorem{remark}[definition]{Remark}
\newtheorem{remarks}[definition]{Remarks}
\newtheorem{example}[definition]{Example}
\newtheorem{examples}[definition]{Examples}}
\newtheorem{proposition}[definition]{Proposition}
\newtheorem{lemma}[definition]{Lemma}
\newtheorem{theorem}[definition]{Theorem}
\newtheorem{corollary}[definition]{Corollary}
\newtheorem*{teorema}{Theorem}
\newtheorem*{teorema1}{Theorem 1}

\newcommand{\ci}{C^{\infty}}
\newcommand{\A}{\mathscr{A}}
\newcommand{\Cat}{\mathscr{C}}
\newcommand{\Dnc}{\mathscr{D}}
\newcommand{\E}{\mathscr{E}}
\newcommand{\F}{\mathscr{F}}
\newcommand{\gr}{\mathscr{G}}
\newcommand{\go}{\mathscr{G} ^{(0)}}
\newcommand{\hr}{\mathscr{H}}
\newcommand{\ho}{\mathscr{H} ^{(0)}}
\newcommand{\gd}{\mathscr{G}^{\mathbb{R}^2}}
\newcommand{\gt}{\mathscr{G} ^{T}}
\newcommand{\I}{\mathscr{I}}
\newcommand{\Nb}{\mathscr{N}}
\newcommand{\Kom}{\mathscr{K}}
\newcommand{\ops}{\mathscr{O}}
\newcommand{\Pb}{\mathscr{P}}
\newcommand{\sw}{\mathscr{S}}
\newcommand{\Uo}{\mathscr{U}}
\newcommand{\Vo}{\mathscr{V}}
\newcommand{\Rr}{\mathbb{R}}
\newcommand{\Nat}{\mathbb{N}}
\newcommand{\src}{\mathscr{S}_{c}}
\newcommand{\cc}{C_{c}^{\infty}}
\newcommand{\cg}{C_{c}^{\infty}(\gr)}
\newcommand{\cgo}{C_{c}^{\infty}(\go)}
\newcommand{\ct}{C_{c}^{\infty}(\gr^T)}
\newcommand{\ckt}{C_{c}^{k}(\gr \times [0,1])}
\newcommand{\ck}{C_{c}^{k}(\gr)}
\newcommand{\ca}{C_{c}^{\infty}(A\gr)}
\newcommand{\Un}{{U}^{(n)}}
\newcommand{\Du}{D_{\mathscr{U}}}






\maketitle

\begin{abstract}
This paper is concerned with the localization of higher analytic indices for Lie groupoids. Let $\gr$ be a Lie groupoid with Lie algebroid $A\gr$. 
Let $\tau$ be a (periodic) cyclic cocycle over the convolution algebra $\cg$. We say that $\tau$ can be localized if there is a correspondence 
\begin{equation}\nonumber
K^0(A^*\gr)\stackrel{Ind_{\tau}}{\longrightarrow}\mathbb{C}
\end{equation}
satisfying $Ind_{\tau}(a)=\langle ind\, D_a,\tau \rangle$ (Connes pairing). In this case, we call $Ind_{\tau}$ the higher localized index associated to $\tau$. 
In \cite{Ca4} we use the algebra of functions over the tangent groupoid introduced in \cite{Ca2}, which is in fact a strict deformation quantization of the Schwartz algebra $\sw(A\gr)$, to prove the following results:
\begin{itemize}
\item Every bounded continuous cyclic cocycle can be localized.
\item If $\gr$ is {\'e}tale, every cyclic cocycle can be localized.
\end{itemize}
We will recall this results with the difference that in this paper, a formula for higher localized indices will be given in terms of an asymptotic limit of a pairing at the level of the deformation algebra mentioned above. We will discuss how the higher index formulas of Connes-Moscovici, Gorokhovsky-Lott fit in this unifying setting.
\end{abstract}

\tableofcontents

\section{Introduction}

This paper is concerned with the localization of higher analytic indices for Lie groupoids. In \cite{CMnov}, Connes and Moscovici defined, for any smooth manifold $M$ and every Alexander-Spanier class $[\bar{\varphi}]\in \bar{H}_{c}^{ev}(M)$, a localized index morphism
\begin{equation}\label{locindCM}
Ind_{\varphi}:K^0_c(T^*M)\longrightarrow \mathbb{C}.
\end{equation}
which has as a particular case the analytic index morphism of Atiyah-Singer for $[1]\in \bar{H}^0(M)$.

Indeed, given an Alexander-Spanier cocycle $\varphi$ on $M$, Connes-Moscovici construct a cyclic cocycle 
$\tau(\varphi)$ over the algebra of smoothing operators, $\Psi^{-\infty}(M)$ (lemma 2.1, ref.cit.). Now, if $D$ is an elliptic pseudodifferential operator over $M$, it defines an index class $ind\, D\in K_0(\Psi^{-\infty}(M))\approx \mathbb{Z}$. Then they showed (theorem 2.4, ref.cit.) that the pairing 
\begin{equation}
\langle ind\, D, \tau(\varphi)\rangle
\end{equation}
only depends on the principal symbol class $[\sigma_D]\in K^0(T^*M)$ and on the class of $\varphi$, and this defines the localized index morphism (\ref{locindCM}). Connes-Moscovici go further to prove a localized index formula generalizing the Atiyah-Singer theorem. They used this formula to prove the so called Higher index theorem for coverings which served for proving the Novikov conjecture for Hyperbolic groups.

We discuss now the Lie groupoid case. This concept is central in non commutative geometry. Groupoids
generalize the concepts of spaces, groups and 
equivalence relations. In the late 70's, mainly with the work
of Alain Connes, it became clear that groupoids appeared naturally as
substitutes of singular spaces \cite{Coinc,Mac,Ren, Pat}. Many people have contributed to realizing this idea. We can
find for instance a groupoid-like treatment in Dixmier's works on
transformation groups, \cite{Dixmier77}, or in Brown-Green-Rieffel's work on orbit
classification of relations, \cite{BGR}. 
In foliation theory, several models for
the leaf space of a foliation were realized using groupoids, mainly
by people like Haefliger (\cite{Haef}) and Wilkelnkemper (\cite{Win}), to mention
some of them. There is also the case of Orbifolds, these can
be seen indeed as {\'e}tale groupoids, (see for example Moerdijk's paper 
\cite{MoerOrb}). There are also some particular groupoid models for manifolds with
corners and conic manifolds worked by people like Monthubert \cite{Mont},
Debord-Lescure-Nistor (\cite{DLN}) 
and Aastrup-Melo-Monthubert-Schrohe (\cite{AMMS}) for example. Furthermore, Connes showed that many
groupoids and algebras associated to them appeared as `non commutative
analogues` of smooth manifolds to which many tools of geometry such as
K-theory and Characteristic classes could be applied \cite{Coinc,Concg}. 
Lie groupoids became a very natural place where to perform pseudodifferential calculus and index theory, \cite{Coinc,MP,NWX}. 

The study of the indices in the groupoid case is, as we will see, more delicate than the classical case. There are new phenomena appearing.
If $\gr$ is a Lie groupoid, a $\gr$-pseudodifferential operator is a differentiable family (see \cite{MP,NWX}) of operators. Let $P$ be such an operator, the index of $P$, $ind\, P$, is an element of $K_0(\cg)$. We have also a Connes-Chern pairing 
$$K_0(\cg)\times HC^{\tiny{even}}(\cg)\stackrel{\langle\_,\_\rangle}{\longrightarrow}\mathbb{C}.$$
We would like to compute the pairings of the form
\begin{equation}\label{pairind}
\langle ind\,D,\tau \rangle
\end{equation}
for $D$ a $\gr$-pseudodifferential elliptic operator. For instance, the Connes-Mosocovici Higher index theorem gives a formula for the above pairing when the groupoid $\gr$ is the groupoid associated to a $\Gamma$-covering and for cyclic group cocycles.

Now, the first step in order to give a formula for the pairing (\ref{pairind}) above is to localize the pairing, that is, to show that it only depends on the principal symbol class in $K^0(A^*\gr)$ (this would be the analog of theorem 2.4, \cite{CMnov}).

Let $\tau$ be a (periodic) cyclic cocycle over $\cg$. We say that $\tau$ can be localized if 
the correspondence
\begin{equation}
\xymatrix{
Ell(\gr)\ar[r]^-{ind}&K_0(\cg)\ar[r]^-{\langle\, \_, \tau \rangle}&\mathbb{C}
}
\end{equation}
factors through the principal symbol class morphism, where $Ell(\gr)$ is the set of $\gr$-pseudodifferential elliptic operators. In other words, if there is a unique morphism $K^0(A^*\gr)\stackrel{Ind_{\tau}}{\longrightarrow}\mathbb{C}$ which fits in the following commutative diagram
\begin{equation}
\xymatrix{
Ell(\gr)\ar[r]^-{ind} \ar[d]_{[psymb]}&K_0(\cg)\ar[r]^-{\langle\, \_, \tau \rangle}&\mathbb{C}\\
K^0(A^*\gr)\ar[rru]_-{Ind_{\tau}}&&
}
\end{equation}
{\it i.e.}, satisfying $Ind_{\tau}(a)=\langle ind\, D_a,\tau \rangle$, and hence completely characterized by this property. 
In this case, we call $Ind_{\tau}$ the higher localized index associated to $\tau$.

In this paper, we prove a localization result using an appropriate strict deformation quantization algebra. For stating the main theorem we need to introduce some terms.

Let $\gr$ be a Lie groupoid. It is known that the topological $K$-theory group $K^0(A^*\gr)$, encodes the classes of principal symbols of all $\gr$-pseudodifferential elliptic operators, \cite{AS}. On other hand the K-theory of the Schwartz algebra of the Lie algebroid satisfies 
$K_0(\sw (A\gr))\approx K^0(A^*\gr)$ (see for instance \cite{Ca4} Proposition 4.5).

In \cite{Ca2}, we constructed a strict deformation quantization of the algebra $\sw (A\gr)$. This algebra is based on the notion of the tangent groupoid which is a deformation groupoid associated to any Lie groupoid: Indeed, associated to a Lie groupoid $\gr \rightrightarrows \go$, there is a Lie groupoid
$$\gr^T:=A\gr \times \{ 0\} \bigsqcup \gr \times (0,1]
\rightrightarrows \go \times [0,1],$$
compatible with $A\gr$ and $\gr$, called the tangent groupoid of $\gr$. We can now recall the main theorem in ref.cit.

\begin{teorema}
There exists an intermediate algebra $\src (\gr^T)$ consisting of smooth functions over the tangent groupoid
$$\ct \subset \src (\gr^T) \subset C_r^*(\gr^T),$$ such that it is a field of algebras over $[0,1]$, whose fibers are
$$\sw (A\gr) \text{ at } t=0, \text{ and }$$
$$\cg \text{ for } t\neq 0.$$  
\end{teorema}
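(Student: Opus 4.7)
The plan is to construct $\src(\gr^T)$ by transferring the Schwartz condition from the vector bundle $A\gr$ into a neighborhood of the $t=0$ fiber of the tangent groupoid through a tubular--neighborhood map, and then patching with compactly supported smooth functions on $\gr\times(0,1]$. To this end, fix once and for all a smooth map $\theta:\Uo\to\Vo$ from an open neighborhood $\Uo$ of the zero section of $A\gr$ onto an open neighborhood $\Vo$ of $\go$ in $\gr$ that witnesses the smooth structure of $\gr^T$ near $t=0$: the formula $(x,X,t)\mapsto(\theta(x,tX),t)$ for $t>0$, extended by $(x,X,0)$ at $t=0$, provides a distinguished chart around the zero--fiber.

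With this chart in hand, define $\src(\gr^T)$ to be the space of smooth functions $f$ on $\gr^T$ satisfying two conditions: (i) for every $\varepsilon>0$ the restriction of $f$ to $\gr\times[\varepsilon,1]$ has compact support in the $\gr$--direction; (ii) when read in the chart above, $f$ is, near $t=0$, a smooth function of $(x,X,t)$ that is Schwartz in the fiber variable $X\in A_x\gr$, with all Schwartz seminorms controlled locally uniformly in $(x,t)$. The definition is tailored so that the evaluation $e_0$ lands in $\sw(A\gr)$ and $e_t$ lands in $\cg$ for $t\neq 0$, since the Schwartz condition in $X$ transported through $\theta(\cdot,tX)$, combined with the cut--off from (i), produces a function compactly supported on $\gr$ as soon as $t$ is bounded away from $0$. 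The inclusion $\ct\subset\src(\gr^T)$ is immediate, while $\src(\gr^T)\subset C_r^*(\gr^T)$ follows from the continuous embeddings $\sw(A\gr)\hookrightarrow C^*(A\gr)$ and $\cg\hookrightarrow C_r^*(\gr)$, together with continuity of the fiber norms in $t$.

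The substantive step, and the one I expect to be the main obstacle, is closure of $\src(\gr^T)$ under convolution. In the chart fixed above, the convolution on $\gr$, read in rescaled fiber coordinates $X=y/t$, degenerates as $t\to 0$ to the fiberwise convolution on the vector bundle $A\gr$, modulo the Jacobian of $\theta$, which is smooth in all variables and equals the identity along $\go$. Stability of Schwartz functions under fiberwise convolution on a vector bundle is classical, but to promote it to stability of the whole family across $t=0$ one must Taylor expand $\theta$ along the zero section and estimate all Schwartz seminorms of the convolution product uniformly in $t\in[0,1]$. This uniform control, together with a standard smoothness criterion across $t=0$, is the heart of the proof; once established, the field--of--algebras structure over $[0,1]$ and the identification of the fibers come without further difficulty.
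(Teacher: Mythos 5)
Your construction is essentially the one the paper recalls from \cite{Ca2}: the rescaled chart $(x,X,t)\mapsto(\theta(x,tX),t)$ is exactly the map $\psi$ composed with a slice chart, your conditions (i)--(ii) reproduce the conic compact support and the Schwartz-type estimates $(s_1)$ of Definitions \ref{ladef}--\ref{src}, and your patching with $\ci_c(\gr\times(0,1])$ is the decomposition (\ref{decomposicion}); the only cosmetic difference is that you use a single global tubular-neighborhood chart where the paper uses an atlas of $X$-slices together with a diffeomorphism-invariance lemma. You correctly single out convolution stability via uniform-in-$t$ seminorm estimates as the substantive step, which is precisely what Theorem 4.10 of \cite{Ca2} establishes.
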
 

Let $\tau$ be a $(q+1)-$multilinear functional over $\cg$. For each $t\neq 0$, we let 
$\tau_t$ be the $(q+1)$-multilinear functional over $\src (\gt)$ defined by
\begin{equation}
\tau_t(f^0,...,f^q):=\tau(f^0_t,...,f^q_t).
\end{equation}
It is immediate that if $\tau$ is a (periodic) cyclic cocycle over $\cg$, then $\tau_t$ is a (periodic) cyclic cocycle over $\src(\gt)$ for each $t\neq 0$.

The main result of this paper is the following:

\begin{teorema1}
Every bounded cyclic cocycle can be localized. Moreover, in this case, the following formula for the higher localized index holds:
\begin{equation}\label{tauindlocintro}
Ind_{\tau}(a)=lim_{t\rightarrow 0}\langle \widetilde{a},\tau_t \rangle,
\end{equation}
where $\widetilde{a}\in K_0(\src(\gt))$ is such that  $e_0(\widetilde{a})=a\in K^0(A^*\gr)$.
In fact the pairing above is constant for $t\neq 0$.
\end{teorema1}

Where, a multilinear map $\tau:\underbrace{\cg \times \cdots \times \cg}_{q+1-times}
\rightarrow \mathbb{C}$ is bounded if it extends to a continuous multilinear map 
$\underbrace{\ck \times \cdots \times \ck}_{q+1-times}\stackrel{\tau_k}{\longrightarrow}\mathbb{C}$, for some $k\in \mathbb{N}$. The restriction of taking bounded continuous cyclic cocycles in the last theorem is not at all restrictive. In fact, all the geometrical cocycles are of this kind (Group cocycles, The transverse fundamental class, Godbillon-Vey and all the Gelfand-Fuchs cocycles for instance).

Moreover, for the case of {\'e}tale groupoids, the  explicit calculations of the Periodic cohomologies spaces developed in \cite{BN,Cra} allow us to conclude that the formula (\ref{tauindlocintro}) above holds for every cyclic cocycle (Corollary \ref{corindloc}).

At the end of this work we will discuss how the higher index formulas of Connes-Moscovici, Gorokhovsky-Lott (\cite{CMnov,GorLotteg}) fit in this unifying setting.

\vspace{2mm}
\noindent
{\bf Acknowledgments} 
I would like to thank Georges Skandalis for reading an earlier version of this paper and for the very useful comments and remarks he did to it. 

I would also like to thank the referee for his remarks to improve this work.

\section{Index theory for Lie groupoids}
\subsection{Lie groupoids}

Let us recall what a groupoid is:

\begin{definition}
A $\it{groupoid}$ consists of the following data:
two sets $\gr$ and $\go$, and maps
\begin{itemize}
\item[$\cdot$]$s,r:\gr \rightarrow \go$ 
called the source and target map respectively,
\item[$\cdot$]$m:\gr^{(2)}\rightarrow \gr$ called the product map 
(where $\gr^{(2)}=\{ (\gamma,\eta)\in \gr \times \gr : s(\gamma)=r(\eta)\}$),
\end{itemize}
such that there exist two maps, $u:\go \rightarrow \gr$ (the unit map) and 
$i:\gr \rightarrow \gr$ (the inverse map),
such that, if we denote $m(\gamma,\eta)=\gamma \cdot \eta$, $u(x)=x$ and 
$i(\gamma)=\gamma^{-1}$, we have 
\begin{itemize}
\item[1.]$r(\gamma \cdot \eta) =r(\gamma)$ and $s(\gamma \cdot \eta) =s(\eta)$.
\item[2.]$\gamma \cdot (\eta \cdot \delta)=(\gamma \cdot \eta )\cdot \delta$, 
$\forall \gamma,\eta,\delta \in \gr$ when this is possible.
\item[3.]$\gamma \cdot x = \gamma$ and $x\cdot \eta =\eta$, $\forall
  \gamma,\eta \in \gr$ with $s(\gamma)=x$ and $r(\eta)=x$.
\item[4.]$\gamma \cdot \gamma^{-1} =u(r(\gamma))$ and 
$\gamma^{-1} \cdot \gamma =u(s(\gamma))$, $\forall \gamma \in \gr$.
\end{itemize}
Generally, we denote a groupoid by $\gr \rightrightarrows \go $.
\end{definition}

Along this paper we will only deal with Lie groupoids, that is, 
a groupoid in which $\gr$ and $\go$ are smooth manifolds (possibly with boundary), and $s,r,m,u$ are smooth maps (with s and r submersions, see \cite{Mac,Pat}). For $A,B$ subsets of $\go$ we use the notation
$\gr_{A}^{B}$ for the subset $\{ \gamma \in \gr : s(\gamma) \in A,\, 
r(\gamma)\in B\}$.

Our first example of Lie groupoids will be the Lie groups, we will give other examples below.

\begin{example}[Lie Groups]
Let $G$ be a Lie group. Then $$G\rightrightarrows \{ e \}$$ is a Lie groupoid with product given by the group product, the unit is the unit element of the group and the inverse is the group inverse
\end{example}

Lie groupoids generalize Lie groups. Now, for Lie groupoids there is also a notion playing the role of the Lie algebra:

\begin{definition}[The Lie algebroid of a Lie groupoid]
Let $\gr \rightarrow \go$ be a Lie groupoid. The Lie algebroid of $\gr$ is the vector bundle  
$$A\gr \rightarrow \go$$
given by definition as the normal vector bundle associated to the inclusion $\go \subset \gr$ 
(we identify $\go$ with its image by $u$). 
\end{definition} 

For the case when a Lie groupoid is given by a Lie group as above $G\rightrightarrows \{e\}$, we recover
$AG=T_eG$. 
Now, in the Lie theory is very important that this vector space, $T_eG$, has a Lie algebra structure. In the setting of Lie groupoids the Lie algebroid $A\gr$ has a structure of Lie algebroid. We will not need this in this paper.

Let us put some classical examples of Lie groupoids.

\begin{example}[Manifolds]
Let $M$ be a $\ci$-manifold. We can consider the groupoid 
$$M\rightrightarrows M$$ where every morphism is the identity over $M$.
\end{example}

\begin{example}[Groupoid associated to a manifold]
Let $M$ be a $\ci$-manifold. We can consider the groupoid 
$$M\times M\rightrightarrows M$$ with $s(x,y)=y$, $r(x,y)=x$ and the product given by 
$(x,y)\circ (y,z)=(x,z)$. We denote this groupoid by $\gr_M$.
\end{example}

\begin{example}\label{grprsub}[Fiber product groupoid associated to a submersion]
This is a generalization of the example above. Let $N\stackrel{p}{\rightarrow} M$ be a submersion. We consider the fiber product $N\times_M N:=\{ (n,n')\in N\times N :p(n)=p(n') \}$,which is a manifold because $p$ is a submersion. We can then take the groupoid 
$$N\times_M N\rightrightarrows N$$ which is only a subgroupoid of
$N\times N$. 
\end{example}

\begin{example}[G-spaces]
Let $G$ be a Lie group acting by diffeomorphisms in a manifold $M$. The transformation groupoid associated to this action is
$$M \rtimes G\rightrightarrows M.$$ As a set $M\rtimes
G=M\times G $, and the maps are given by 
$s(x,g)=x\cdot g$, $r(x,g)=x$, the product given by 
$(x,g)\circ (x\cdot g,h)=(x,gh)$, the unit is $u(x)=(x,e)$ and with inverse $(x,g)^{-1}=(x\cdot g,g^{-1})$.
\end{example}

\begin{example}[Vector bundles]
Let $E\stackrel{p}{\rightarrow} X$ be a smooth vector bundle over a manifold $X$. We consider the groupoid 
$$E\rightrightarrows X$$ with $s(\xi)=p(\xi)$, $r(\xi)=p(\xi)$, the product uses the vector space structure and it is given by $\xi \circ \eta =\xi +\eta$, the unit is zero section and the inverse is the additive inverse at each fiber.
\end{example}

\begin{example}[Haefliger's groupoid]
Let $q$ be a positive integer. The Haefliger's groupoid $\Gamma^q$ has as space of objects $\Rr^q$. A morphism (or arrow) $x\mapsto y$ in $\Gamma^q$ is the germ of a (local) diffeomorphism $(\Rr^q,x)\rightarrow (\Rr^q,y)$. This Lie groupoid and its classifying space play a vey important role in the theory of foliations, \cite{Haef}.
\end{example}

\begin{example}[Orbifolds]
An Orbifold is an {\'e}tale groupoid for which $(s,r):\gr \rightarrow \go \times \go$ is a proper map.
See \cite{MoerOrb} for further details.
\end{example}

\begin{example}[Groupoid associated to a covering]
Let $\Gamma$ be a discret group acting freely and properly in $\widetilde{M}$ with compact quotient 
$\widetilde{M}/\Gamma:=M$. 
We denote by $\gr$ the quotient $\widetilde{M}\times \widetilde{M}$ 
by the diagonal action of $\Gamma$. We have a Lie groupoid 
$\gr \rightrightarrows \go=M$ with $s(\widetilde{x},\widetilde{y})=y$, 
$r(\widetilde{x},\widetilde{y})=x$ and product 
$(\widetilde{x},\widetilde{y})\circ(\widetilde{y},\widetilde{z})
=(\widetilde{x},\widetilde{z})$. 

A particular of this situation is when 
$\Gamma=\pi_1(M)$ and $\widetilde{M}$ is the universal covering. This groupoid played a main role in the Novikov's conjecture proof for hyperbolic groups given by Connes and Moscovici, \cite{CMnov}.
\end{example}

\begin{example}[Holonomy groupoid of a Foliation]
Let $M$ be a compact manifold of dimension $n$. Let $F$ be a subvector bundle of the tangent bundle $TM$.
We say that $F$ is integrable if  
$\ci(F):=\{ X\in \ci(M,TM): \forall x\in M, X_x\in F_x\}$ is a Lie subalgebra of $\ci(M,TM)$. This induces a partition in embedded submanifolds (the leaves of the foliation), given by the solution of integrating $F$. 

The holonomy groupoid of $(M,F)$ is a Lie groupoid  
$$\gr(M,F)\rightrightarrows M$$ with Lie algebroid $A\gr=F$ and minimal in the following sense: 
any Lie groupoid integrating the foliation \footnote{having $F$ as Lie algebroid} contains an open subgroupoid which maps onto the holonomy groupoid by a smooth morphism of Lie groupoids. 

The holonomy groupoid was constructed by Ehresmann \cite{Ehr} and Winkelnkemper \cite{Win} (see also 
\cite{Candel}, \cite{God}, \cite{Pat}).
\end{example}

\subsubsection{The convolution algebra of a Lie groupoid}

We recall how to define an algebra structure in $\cg$ using
smooth Haar systems.
 
\begin{definition}
A $\it{smooth\, Haar\, system}$ over a Lie groupoid is a family of
measures $\mu_x$ in $\gr_x$ for each $x\in \go$ such that,

\begin{itemize}
\item for $\eta \in \gr_{x}^{y}$ we have the following compatibility
  condition:
$$\int_{\gr_x}f(\gamma)d\mu_x(\gamma)
=\int_{\gr_y}f(\gamma \circ \eta)d\mu_y(\gamma)$$
\item for each $f\in \cg$ the map
$$x\mapsto \int_{\gr_x}f(\gamma)d\mu_x(\gamma) $$ belongs to $\cgo$
\end{itemize}

\end{definition}

A Lie groupoid always posses a smooth Haar system. In fact, if we
fix a smooth (positive) section of the 1-density bundle associated to
the Lie algebroid we obtain a smooth Haar system 
in a canonical way. 
We suppose for the rest of the
paper a given smooth Haar system given by 1-densities (for complete
details see \cite{Pat}). 
We can now define a convolution
product on $\cg$: Let $f,g\in \cg$, we set

$$(f*g)(\gamma)
=\int_{\gr_{s(\gamma)}}
f(\gamma \cdot \eta^{-1})g(\eta)d\mu_{s(\gamma)}(\eta)$$

This gives a well defined associative product. 
\begin{remark}
There
is a way to define the convolution algebra
using half densities (see Connes book \cite{Concg}).
\end{remark}

\subsection{Analytic indices for Lie groupoids}

As we mentioned in the introduction, we are going to consider some elements
in the $K$-theory group $K_0(\cg)$. We recall how these elements are
usually defined (See \cite{NWX} for complete details): First we recall a few facts about 
$\gr$-Pseudodifferential calculus:

A $\gr$-$\it{Pseudodifferential}$ $\it{operator}$ is a family of
pseudodifferential operators $\{ P_x\}_{x\in \go} $ acting in
$\ci_c(\gr_x)$ such that if $\gamma \in \gr $ and
$$U_{\gamma}:\ci_c(\gr_{s(\gamma)}) \rightarrow \ci_c(\gr_{r(\gamma)}) $$
the induced operator, then we have the following compatibility condition 
$$ P_{r(\gamma)} \circ U_{\gamma}= U_{\gamma} \circ P_{s(\gamma)}.$$
We also admit, as usual, operators acting in sections of vector bundles $E\rightarrow \go$.
There is also a differentiability condition with respect to $x$ that can
be found in \cite{NWX}.

In this work we are going to work exclusively with uniformly supported operators, let us recall this notion. Let $P=(P_x,x\in \go)$ be a $\gr$-operator, we denote by $k_x$ the Schwartz kernel pf $P_x$. Let 
$$supp\, P:=\overline{\cup_xsupp \,k_x}, \text{ and }$$
$$supp_{\mu}P:=\mu_1(supp\,P),$$ where $\mu_1(g',g)=g'g^{-1}$. 
We say that $P$ is uniformly supported if $supp_{\mu}P$ is compact.

We denote by $\Psi^m(\gr,E)$ the space of uniformly supported $\gr$-operators, acting on sections of a vector bundle $E$. We denote also
\begin{center}
$\Psi^{\infty}(\gr,E)=\bigcup_m \Psi^m(\gr,E)$ et $\Psi^{-\infty}(\gr,E)=\bigcap_m \Psi^m(\gr,E).$
\end{center}

The composition of two such operators is again of this kind (lemma 3, \cite{NWX}). 
In fact, $\Psi^{\infty}(\gr,E)$ is a filtered algebra (theorem 1, réf.cit.), $\it{i.e.}$, 
\begin{center}
$\Psi^{m}(\gr,E)\Psi^{m'}(\gr,E)\subset \Psi^{m+m'}(\gr,E).$
\end{center}
In particular,  $\Psi^{-\infty}(\gr,E)$ is a bilateral ideal.

\begin{remark}
The choice on the support justifies on the fact that  
$\Psi^{-\infty}(\gr,E)$ is identified with 
$\ci_c(\gr,End(E))$, thanks the Schwartz kernel theorem (theorem 6 \cite{NWX}). 
\end{remark}

The notion of principal symbol extends also to this setting. Let us denote by 
$\pi: A^*\gr \rightarrow \go$ the projection. For $P=(P_x,x\in \go)\in \Psi^{m}(\gr,E,F)$, 
the principal symbol of $P_x$, $\sigma_m(P_x)$, is a $\ci$ section of the vector bundle 
$End(\pi_x^*r^*E, \pi_x^*r^*F)$ over $T^*\gr_x$ (where 
$\pi_x:T^*\gr_x \rightarrow \gr_x$), such that at each fiber the morphism is homogeneous of degree $m$ 
(see \cite{AS} for more details). 
There is a section  
$\sigma_m(P)$ of $End(\pi^*E, \pi^*F)$ over $A^*\gr$ 
such that
\begin{equation}\label{gpsym}
\sigma_m(P)(\xi)=\sigma_m(P_x)(\xi)\in End(E_x,F_x) \text{ si } \xi \in A^*_x\gr
\end{equation}
Hence (\ref{gpsym}) above, induces a unique surjective linear map
\begin{equation}\label{gpsymap}
\sigma_m:\Psi^{m}(\gr,E)\rightarrow \sw^m(A^*\gr,End(E,F)),
\end{equation}
with kernel $\Psi^{m-1}(\gr,E)$ (see for instance 
proposition 2 \cite{NWX}) and where $\sw^m(A^*\gr,End(E,F))$ denotes the sections of the fiber 
$End(\pi^*E,\pi^*F)$ over $A^*\gr$ 
homogeneous of degree $m$ at each fiber.

\begin{definition}[$\gr$-Elliptic operators]
Let $P=(P_x,x\in \go)$ be a $\gr$-pseudodifferential operator. 
We will say that $P$ is elliptic if each $P_x$ is elliptic. 

We denote by $Ell(\gr)$ the set of $\gr$-pseudodifferential elliptic operators.
\end{definition}

The linear map  (\ref{gpsymap}) defines a principal symbol class 
$[\sigma(P)]\in K^0(A^*\gr)$:
\begin{equation}\label{ellgsymb}
Ell(\gr)\stackrel{\sigma}{\longrightarrow}K^0(A^*\gr).
\end{equation}

Connes, \cite{Coinc}, proved that if $P=(P_x,x\in \go)\in Ell(\gr)$, then it exists 
$Q\in  \Psi^{-m}(\gr,E)$ such that
\begin{center}
$I_E-PQ\in \Psi^{-\infty}(\gr,E)$ et $I_E-QP\in \Psi^{-\infty}(\gr,E)$,
\end{center}
where $I_E$ denotes the identity operator over $E$. In other words, $P$ defines a quasi-isomorphism in 
$(\Psi^{+\infty},\cg)$ and thus an element in $K_0(\cg)$ explicitly (when $E$ is trivial) given by  
\begin{equation}\label{index}
\left[T\left( 
\begin{array}{cc}
1 & 0\\
0 & 0
\end{array}
\right)
T^{-1}\right]- \left[ \left( 
\begin{array}{cc}
1 & 0\\
0 & 0
\end{array}
\right) \right] \in K_0(\widetilde{\cg}),
\end{equation}
where $1$ is the unit in $\widetilde{\cg}$ (unitarisation of $\cg$), and where $T$ is given by 
$$
T=\left( 
\begin{array}{cc}
(1-PQ)P+P & PQ-1\\
1-QP & Q
\end{array}
\right)
$$
with inverse
$$T^{-1}=\left( 
\begin{array}{cc}
Q & 1-QP\\
PQ-1 & (1-PQ)P+P
\end{array}
\right).$$

If $E$ is not trivial we obtain in the same way an element of 
$K_0(\ci_c(\gr,End(E,F)))\approx K_0(\cg)$ since $\ci_c(\gr,End(E,F)))$ is Morita
equivalent to $\cg$.

\begin{definition}[$\gr$-Index]
Let $P$ be a $\gr$-pseudodifferential elliptic operator. We denote by $ind\,P\in K_0(\cg)$ 
the element defined by $P$ as above. It is called the index of $P$. It defines a correspondence
\begin{equation}\label{ellgind}
Ell(\gr)\stackrel{ind}{\longrightarrow}K_0(\cg).
\end{equation}
\end{definition}

\begin{example}[The principal symbol class as a Groupoid index]
Let $\gr$ be a Lie groupoid. We can consider the Lie algebroid as Lie groupoid with its vector bundle structure $A\gr\rightrightarrows \go$. Let $P$ be a $\gr$-pseudodifferential elliptic operator, then 
the principal symbol $\sigma(P)$ is a $A\gr$-pseudodifferential elliptic operator. Its index, 
$ind(\sigma(P))\in K_0(\ci_c(A\gr))$ can be pushforward to $K_0(C_0(A^*\gr))$ using the inclusion of algebras 
$\ci_c(A\gr)\stackrel{j}{\hookrightarrow}C_0(A^*\gr)$ (modulo Fourier), the resulting image gives 
precisely the map (\ref{ellgsymb}) above, {\it i.e.}, $j_*(ind\,\sigma(P))=[\sigma(P)]\in K_0(C_0(A^*\gr))\approx 
K^0(A^*\gr)$.
\end{example}

We have a diagram
\[
\xymatrix{
Ell(\gr) \ar[r]^{ind}\ar[d]_{\sigma}& K_0(\cg) \\
K^0(A^*\gr) \ar@{.>}[ur] & ,
}
\]
where the pointed arrow does not always exist. It does in the classical cases, but not for general Lie groupoids as shown by the next example (\cite{Concg} pp. 142).

\begin{example}\label{exConn}
Let $\Rr \rightarrow \{0\}$ be the groupoid given by the group structure in $\Rr$. In 
\cite{Concg} (proposition 12, II.10.$\gamma$), Connes shows that the map 
$$D\mapsto ind\,D\in K_0(\ci_c(\Rr))$$ defines an injection of the projective space of no zero polynomials $D=P(\frac{\partial}{\partial x})$ into $K_0(\ci_c(\Rr))$.
\end{example}
We could consider the morphism 
\begin{equation}
K_0(\cg)\stackrel{j}{\longrightarrow} K_0(C^*_r(\gr))
\end{equation}
induced by the inclusion $\cg \subset C^*_r(\gr)$, then the composition 
$$Ell(\gr)\stackrel{ind}{\longrightarrow}K_0(\cg)\stackrel{j}{\longrightarrow} K_0(C^*_r(\gr))$$ does factors through the principal symbol class. In other words, we have the following commutative diagram
\[
\xymatrix{
Ell(\gr) \ar[r]^{ind}\ar[d]_{\sigma}& K_0(\cg)\ar[d]^j \\
K^0(A^*\gr) \ar[r]_{ind_a} & K_0(C^*_r(\gr)).
}
\]
Indeed,, $ind_a$ is the index morphism associated to the exact sequence of $C^*$-algebras (\cite{Coinc}, \cite{CS}, \cite{MP}, \cite{NWX})
\begin{equation}\label{gpdose}
0\rightarrow C^*_r(\gr)\longrightarrow \overline{\Psi^0(\gr)}\stackrel{\sigma}{\longrightarrow}C_0(S^*\gr)\rightarrow 0
\end{equation}
where $\overline{\Psi^0(\gr)}$ is a certain $C^*-$completion of $\Psi^0(\gr)$, $S^*\gr$ is the sphere vector bundle of $A^*\gr$ and $\sigma$ is the extension of the principal symbol.

\begin{definition}\label{gia}[$\gr$-Analytic index]
Let $\gr \rightrightarrows \go$ be a Lie groupoid. 
The morphism
\begin{equation}
K^0(A^*\gr) \stackrel{ind_a}{\longrightarrow}K_0(C^*_r(\gr))
\end{equation}
is called the analytic index of $\gr$.
\end{definition}

The $K$-theory of $C^*$-algebras has very good cohomological properties, however as we are going to discuss in the next subsection, it is sometimes preferable to work with the indices at the level of $\ci_c$-algebras.


\subsubsection{Pairing with Cyclic cohomology: Index formulas}

The interest to keep track on the $\ci_c$-indices is because at this level we can make explicit calculations via the Chern-Weil-Connes theory. In fact there is a pairing 
\cite{Concdg,Concg,Karhc}
\begin{equation}\label{accouplement}
\langle \_ \, , \_ \rangle :K_0(\cg)\times HP^*(\cg)\rightarrow \mathbb{C}
\end{equation}
There are several known cocycles over $\cg$. An important problem in Noncommutative Geometry is to compute the above pairing in order to obtain numerical invariants from the indices in  $K_0(\cg)$, \cite{Concg,CMnov,GorLottfg}. Let us illustrate this affirmation with the following example.

\begin{example}\label{intronov}\cite{CMnov,Concg}
Let $\Gamma$ be a discrete group acting properly and freely on a smooth manifold $\tilde{M}$ with compact quotient
$\tilde{M}/\Gamma:=M$. Let $\gr\rightrightarrows \go=M$ be the Lie groupoid quotient of $\tilde{M}\times
\tilde{M}$ by the diagonal action of $\Gamma$. 

Let $c \in H^{*}(\Gamma):=H^*(B\Gamma)$. Connes-Moscovici showed in \cite{CMnov} that the higher Novikov signature, $Sign_c(M)$, 
can be obtained with the pairing of the signature operator $D_{sign}$
and a cyclic cocycle $\tau_c$ associated to $c$: 
\begin{equation}\label{signind}
\langle \tau_c , ind\, D_{sgn}\rangle=Sign_c(M,\psi). 
\end{equation}
The Novikov conjecture states that these higher signatures are oriented homotopy invariants of
$M$. Hence, if
$ind\, D_{sign}\in K_0(\cg)$ is a homotopy invariant of
$(M,\psi)$ then the Novikov conjecture would follow. We only know that 
$j(ind\, D_{sign})\in K_0(C^*_r(\gr))$ is a homotopy invariant. But then we have to extend the action of $\tau_c$ to $K_0(C^*_r(\gr))$. Connes-Moscovici show that this action extends for Hyperbolic groups.
\end{example}

The pairing (\ref{accouplement}) above is not interesting for $C^*$-algebras. Indeed, the Cyclic cohomology for $C^*$-algebras is trivial (see \cite{CTS} 5.2 for an explanation). In fact, as shown by the example above, a very interesting problem is to compute the pairing at the $\ci_c$-level and then extend the action of the cyclic cocycles to the $K$-theory of the $C^*$-algebra. This problem is known as the extension problem and it was solved by Connes for some cyclic cocycles associated to foliations, \cite{Cotfc}, and by Connes-Moscovici, \cite{CMnov}, for group cocycles when the group is hyperbolic. 

The most general formula for the pairing (\ref{accouplement}), known until these days (as far the author is aware), is the one of Gorokhovsky-Lott for Foliation groupoids (\cite{GorLottfg}, theorem 5.) which generalized a previous Connes formula for {\'e}tale groupoids (\cite{Concg}, theorem 12, III.7.$\gamma$, see also \cite{GorLotteg} for a superconnection proof). It basically says the following: Let $\gr\rightrightarrows M$ be a foliation groupoid (Morita equivalent to an {\'e}tale groupoid). It carries a foliation $\F$. Let $\rho$ be a closed holonomy-invariant transverse current on $M$. Suppose $\gr$ acts freely, properly and cocompactly on a manifold $P$. Let $D$ be a $\gr$-elliptic differential operator on $P$. Then the following formula holds:
\begin{equation}
\langle Ind\, D, \rho\rangle=\int_{P/\gr}\hat{A}(TF)ch([\sigma_D])\nu^*(\omega_{\rho}),
\end{equation}
where $\omega_{\rho}\in H^*(B\gr,o)$ is a universal class associated to $\rho$ and $\nu:P/\gr\rightarrow B\gr$ is a classifying map.

Now, we can expect an easy (topological) calculation only if the map 
$D\mapsto \langle D \, , \tau \rangle$ 
($\tau \in HP^*(\cg)$ fix) factors through the symbol class of $D$, $[\sigma(D)]\in K^0(A^*\gr)$: we want 
to have a diagram of the following kind:
\[
\xymatrix{
Ell(\gr) \ar[r]^-{ind} \ar[d]_{\sigma} & K_0(\cg) \ar[rr]^-{\langle \_ , \tau \rangle} & & \mathbb{C} \\
K^0(A^*\gr) \ar@{.>}[urrr]_-{\tau} & &  &.
}
\]

This paper is concerned with the solution of the factorization problem justed described. Our approach will use a geometrical deformation associated to any Lie groupoid, known as the tangent groupoid. We will discuss this in the next section.

\section{Index theory and strict deformation quantization}

\subsection{Deformation to the normal cone}

The tangent groupoid is a particular case of a geometric construction that we describe here.

Let $M$ be a $\ci$ manifold and $X\subset M$ be a $\ci$ submanifold. We denote
by $\Nb_{X}^{M}$ the normal bundle to $X$ in $M$, $\it{i.e.}$, 
$\Nb_{X}^{M}:= T_XM/TX$.

We define the following set
\begin{align}
\Dnc_{X}^{M}:= \Nb_{X}^{M} \times {0} \bigsqcup M \times \Rr^* 
\end{align} 
The purpose of this section is to recall how to define a $\ci$-structure in $\Dnc_{X}^{M}$. This is more or less classical, for example
it was extensively used in \cite{HS}.

Let us first consider the case where $M=\Rr^p\times \Rr^q$ 
and $X=\Rr^p \times \{ 0\}$ (where we
identify canonically $X=\Rr^p$). We denote by
$q=n-p$ and by $\Dnc_{p}^{n}$ for $\Dnc_{\Rr^p}^{\Rr^n}$ as above. In this case
we clearly have that $\Dnc_{p}^{n}=\Rr^p \times \Rr^q \times \Rr$ (as a
set). Consider the 
bijection  $\psi: \Rr^p \times \Rr^q \times \Rr \rightarrow
\Dnc_{p}^{n}$ given by 
\begin{equation}\label{psi}
\psi(x,\xi ,t) = \left\{ 
\begin{array}{cc}
(x,\xi ,0) &\mbox{ if } t=0 \\
(x,t\xi ,t) &\mbox{ if } t\neq0
\end{array}\right.
\end{equation}
which inverse is given explicitly by 
$$
\psi^{-1}(x,\xi ,t) = \left\{ 
\begin{array}{cc}
(x,\xi ,0) &\mbox{ if } t=0 \\
(x,\frac{1}{t}\xi ,t) &\mbox{ if } t\neq0
\end{array}\right.
$$
We can consider the $\ci$-structure on $\Dnc_{p}^{n}$
induced by this bijection.

We pass now to the general case. A local chart 
$(\Uo,\phi)$ in $M$ is said to be a $X$-slice if 
\begin{itemize}
\item[1)]$\phi : \Uo \stackrel{\cong}{\rightarrow} U \subset \Rr^p\times \Rr^q$
\item[2)]If $\Uo \cap X =\Vo$, $\Vo=\phi^{-1}( U \cap \Rr^p \times \{ 0\}
  )$ (we denote $V=U \cap \Rr^p \times \{ 0\}$)
\end{itemize}
With this notation, $\Dnc_{V}^{U}\subset \Dnc_{p}^{n}$ as an
open subset. We may define a function 
\begin{equation}\label{phi}
\tilde{\phi}:\Dnc_{\Vo}^{\Uo} \rightarrow \Dnc_{V}^{U} 
\end{equation}
in the following way: For $x\in \Vo$ we have $\phi (x)\in \Rr^p
\times \{0\}$. If we write 
$\phi(x)=(\phi_1(x),0)$, then 
$$ \phi_1 :\Vo \rightarrow V \subset \Rr^p$$ 
is a diffeomorphism. We set 
$\tilde{\phi}(v,\xi ,0)= (\phi_1 (v),d_N\phi_v (\xi ),0)$ and 
$\tilde{\phi}(u,t)= (\phi (u),t)$ 
for $t\neq 0$. Here 
$d_N\phi_v: N_v \rightarrow \Rr^q$ is the normal component of the
 derivative $d\phi_v$ for $v\in \Vo$. It is clear that $\tilde{\phi}$ is
 also a  bijection (in particular it induces a $C^{\infty}$ structure on $\Dnc_{\Vo}^{\Uo}$). 
Now, let us consider an atlas 
$ \{ (\Uo_{\alpha},\phi_{\alpha}) \}_{\alpha \in \Delta}$ of $M$
 consisting of $X-$slices. Then the collection $ \{ (\Dnc_{\Vo_{\alpha}}^{\Uo_{\alpha}},\tilde{\phi_{\alpha})}
  \} _{\alpha \in \Delta }$ is a $\ci$-atlas of
  $\Dnc_{X}^{M}$ (proposition 3.1 in \cite{Ca2}).

\begin{definition}[Deformation to the normal cone]
Let $X\subset M$ be as above. The set
$\Dnc_{X}^{M}$ equipped with the  $C^{\infty}$ structure
induced by the atlas described in the last proposition is called
$\it{"The\, deformation\, to\, normal\, cone\, associated\, to\,}$   
$X\subset M$". 
\end{definition}

\begin{remark}
Following the same steps, we can define a deformation to the normal
cone associated to an injective immersion $X\hookrightarrow M$.
\end{remark}

One important feature about this construction is that it is in
some sense functorial. More explicitly, let $(M,X)$ 
and $(M',X')$ be $\ci$-pairs as above and let
 $F:(M,X)\rightarrow (M',X')$
be a pair morphism, i.e., a $\ci$ map   
$F:M\rightarrow M'$, with $F(X)\subset X'$. We define 
$ \Dnc(F): \Dnc_{X}^{M} \rightarrow \Dnc_{X'}^{M'} $ by the following formulas:\\

$\Dnc(F) (x,\xi ,0)= (F(x),d_NF_x (\xi),0)$ and\\

$\Dnc(F) (m ,t)= (F(m),t)$ for $t\neq 0$,
\noindent
where $d_NF_x$ is by definition the map
\[  (\Nb_{X}^{M})_x 
\stackrel{d_NF_x}{\longrightarrow}  (\Nb_{X'}^{M'})_{F(x)} \]
induced by $ T_xM 
\stackrel{dF_x}{\longrightarrow}  T_{F(x)}M'$.

Then $\Dnc(F):\Dnc_{X}^{M} \rightarrow \Dnc_{X'}^{M'}$ is a $\ci$-map (proposition 3.4 in \cite{Ca2}).

\subsection{The tangent groupoid}


\begin{definition}[Tangent groupoid]
Let $\gr \rightrightarrows \go $ be a Lie groupoid. $\it{The\, tangent\,
groupoid}$ associated to $\gr$ is the groupoid that has $\Dnc_{\go}^{\gr}$ as the set of arrows and  $\go \times \Rr$ as the units, with:
\begin{itemize}
\item[$\cdot$] $s^T(x,\eta ,0) =(x,0)$ and $r^T(x,\eta ,0) =(x,0)$ at $t=0$.
\item[$\cdot$] $s^T(\gamma,t) =(s(\gamma),t)$ and $r^T(\gamma,t)
  =(r(\gamma),t)$ at $t\neq0$.
\item[$\cdot$] The product is given by
  $m^T((x,\eta,0),(x,\xi,0))=(x,\eta +\xi ,0)$ et \linebreak $m^T((\gamma,t), 
  (\beta ,t))= (m(\gamma,\beta) , t)$ if $t\neq 0 $ and 
if $r(\beta)=s(\gamma)$.
\item[$\cdot$] The unit map $u^T:\go \rightarrow \gr^T$ is given by
 $u^T(x,0)=(x,0)$ and $u^T(x,t)=(u(x),t)$ for $t\neq 0$.
\end{itemize}
We denote $\gr^{T}:= \Dnc_{\go}^{\gr}$ and $A\gr:=\Nb_{\go}^{\gr}$.
\end{definition} 

As we have seen above $\gr^{T}$ can be considered as a $\ci$ manifold with
boundary. As a consequence of the functoriality of the Deformation to the normal cone,
one can show that the tangent groupoid is in fact a Lie
groupoid. Indeed, it is easy to check that if we identify in a
canonical way $\Dnc_{\go}^{\gr^{(2)}}$ with $(\gr^T)^{(2)}$, then 
$$ m^T=\Dnc(m),\, s^T=\Dnc(s), \,  r^T=\Dnc(r),\,  u^T=\Dnc(u)$$
where we are considering the following pair morphisms:
\begin{align}  
m:((\gr)^{(2)},\go)\rightarrow (\gr,\go ), \nonumber
\\
s,r:(\gr ,\go) \rightarrow (\go,\go),\nonumber 
\\
u:(\go,\go)\rightarrow (\gr,\go ).\nonumber
\end{align}

\begin{remark}\label{haartg}
Finally, let $\{ \mu_x\}$ be a smooth Haar system on $\gr$, {\it i.e.}, a choice of $\gr$-invariant Lebesgue measures. In particular we have an associated smooth Haar system on $A\gr$ (groupoid given by the vector bundle structure), which we denote again by $\{ \mu_x\}$. Then the following family $\{\mu_{(x,t)}\}$ is a smooth Haar system for the tangent groupoid of $\gr$ (details may be found in \cite{Pat}):
\begin{itemize}
\item $\mu_{(x,0)}:=\mu_x$ at $(\gr^T)_{(x,0)}=A_x\gr$ and 
\item $\mu_{(x,t)}:=t^{-q}\cdot \mu_x$  at $(\gr^T)_{(x,t)}=\gr_x$ for
  $t\neq 0$, where $q=dim\, \gr_x$.
\end{itemize}
In this article, we are only going to consider these Haar systems for the tangent groupoids.
\end{remark}

\subsubsection{Analytic indices for Lie groupoids as deformations}

Let $\gr \rightrightarrows \go$ be a Lie groupoid and
$$K^0(A^*\gr)\stackrel{ind_a}{\longrightarrow} K_0(C_{r}^{*}(\gr)),$$
its analytic index. This morphism can also be constructed using the tangent groupoid and its $C^*$-algebra.

It is easy to check that the evaluation morphisms extend to the $C^*$-algebras:
 $$C_r^*(\gr^T) \stackrel{ev_0}{\longrightarrow}
C_r^*(A\gr) \text{ and }$$ $$C_r^*(\gr^T) \stackrel{ev_t}{\longrightarrow}
C_r^*(\gr) \text{ for } t\neq 0.$$
Moreover, since $\gr \times (0,1]$ is an open saturated subset of $\gt$ and $A\gr$ an open saturated closed subset, we have the following exact sequence (\cite{HS})
\begin{equation}\label{segt}
0 \rightarrow C_r^*(\gr \times (0,1]) \longrightarrow C_r^*(\gr^T)
\stackrel{ev_0}{\longrightarrow}
C_r^*(A\gr) \rightarrow 0.
\end{equation}
Now, the $C^*$-algebra $C_r^*(\gr \times (0,1])\cong C_0((0,1],C_r^*(\gr))$ is contractible. 
This implies that the groups $K_i(C_r^*(\gr \times (0,1]))$ vanish, for $i=0,1$. 
Then, applying the $K-$theory  functor to the exact sequence above, we obtain that
$$K_i(C_r^*(\gr^T)) \stackrel{(ev_0)_*}{\longrightarrow}
K_i(C_r^*(A\gr))$$ is an isomorphism, for $i=0,1$. In \cite{MP},
Monthubert-Pierrot show that
\begin{equation}\label{defind}
ind_a=(ev_1)_*\circ
(ev_0)_*^{-1},
\end{equation}
modulo the Fourier isomorphism identifying $C_r^*(A\gr)\cong
C_0(A^*\gr)$ (see also \cite{HS} and \cite{NWX}).
Putting this in a commutative diagram, we have
\begin{equation}\label{eldiagramaevaluadoestrella}
\xymatrix{
&K_0(C_r^*(\gr^T))\ar[ld]_{e_0}^{\approx} \ar[rd]^{e_1}&\\
K^0(A^*\gr)\ar[rr]_{ind_{a}}&&K_{0}(C^*_r(\gr)). 
}
\end{equation}
Compare the last diagram with (\ref{eldiagramaevaluado}) above.

The algebra $C_r^*(\gr^T)$ is a strict deformation quantization of $C_0(A^*\gr)$, and the analytic index morphism of $\gr$ can be constructed by means of this deformation. In the next section we are going to discuss the existence of a strict deformation quantization algebra associated the tangent groupoid but in more primitive level, that is, not a $C^*$-algebra but a Schwartz type algebra. We will use afterwards to define other index morphisms as deformations.

\subsection{A Schwartz algebra for the tangent groupoid}


In this section we will recall how to construct the deformation algebra mentioned at the introduction.
For complete details, we refer the reader to \cite{Ca2}.

The Schwartz algebra for the Tangent groupoid will be a particular
case of a construction associated to any deformation to the normal
cone. 

\begin{definition}\label{ladef}
Let $p, q\in \Nat$ and $U \subset \Rr^p
  \times \Rr^q$ 
an open subset, and let $V=U\cap (\Rr^p \times \{ 0\})$.
\begin{itemize}
\item[(1)]Let $K\subset U \times \Rr$ be a compact
  subset. We say that $K$ is a conic compact subset of $U \times \Rr$
relative to $V$ if
\[ K_0=K\cap (U \times \{ 0\}) \subset V\]

\item[(2)]Let $\Omega_{V}^{U}=\{(x,\xi,t)\in 
\Rr^p \times \Rr^q \times \Rr: (x,t\cdot \xi)\in U \},$ 
which is an open subset of $\Rr^p \times \Rr^q \times \Rr$ and thus
a $\ci$ manifold. 
Let $g \in \ci (\Omega_{V}^{U})$. We say that
   $g$ has compact conic support, if there exists a conic
  compact $K$
 of $U \times \Rr$ relative to $V$ such that if 
$(x, t\xi ,t) \notin K$ then $g(x, \xi ,t)=0$.

\item[(3)]We denote by $\src (\Omega_{V}^{U})$ 
the set of functions
$g\in \ci (\Omega_{V}^{U})$ 
that have compact conic support and that satisfy the following condition:

\begin{itemize}
\item[$(s_1$)]$\forall$ $k,m\in \Nat$, $l\in \Nat^p$
and $\alpha \in \Nat^q$ there exist $C_{(k,m,l,\alpha)} >0$ such that
\[ (1+\| \xi \|^2)^k \| \partial_{x}^{l}\partial_{\xi}^{\alpha}
\partial_{t}^{m}g(x,\xi ,t) \| \leq C_{(k,m,l,\alpha)}   \]
\end{itemize}

\end{itemize}

\end{definition}

Now, the spaces $\src (\Omega_{V}^{U})$ are invariant under
diffeomorphisms. More precisely: Let $F:U\rightarrow U'$ be a $\ci$-diffeomorphism such that $F(V)=V'$; let 
$\tilde{F}:\Omega_{V}^{U}\rightarrow \Omega_{V'}^{U'}$ be the induced map. Then, for every 
$g\in \src (\Omega_{V'}^{U'})$, we have that
$\tilde{g}:= g\circ \tilde{F} \in \src (\Omega_{V}^{U})$ (proposition 4.2 in \cite{Ca2}).

This compatibility result allows to give the following definition.

\begin{definition}\label{src}
Let $g \in \ci (\Dnc_{X}^{M}) $.
\begin{itemize}

\item[(a)]We say that $g$ has conic compact support $K$, if there exists a compact subset
 $K\subset M \times \Rr$ with $K_0:=K\cap (M\times \{ 0\}) \subset X$ (conic
 compact relative to $X$) such that if $t\neq 0$ and 
$(m,t) \notin K$ then $g(m,t)=0$.

\item[(b)]We say that $g$ is rapidly decaying at zero if for every
$(\Uo,\phi)$  $X$-slice chart
and for every $\chi \in \ci_c(\Uo \times \Rr)$, the map 
$g_{\chi}\in \ci(\Omega_{V}^{U})$ ($\Omega_{V}^{U}$ as in definition \ref{ladef}.)
given by
\[ g_{\chi}(x,\xi ,t)= (g\circ \varphi^{-1})(x,\xi ,t) 
\cdot (\chi \circ p \circ \varphi^{-1})(x,\xi ,t) \]
is in  $\src (\Omega_{V}^{U})$, where 
\begin{itemize}
\item[$\cdot$] $p$ is the deformation of the pair map $(M,X)\stackrel{Id}{\longrightarrow}
(M,M)$, {\it i.e.}, $p:\Dnc_{X}^{M} \rightarrow M\times \Rr$ is given by $(x,\xi,0)\mapsto (x,0)$, and 
$(m,t)\mapsto (m,t)$ for $t\neq 0$, and
\item[$\cdot$] $\varphi:=\tilde{\phi}^{-1}\circ \psi :\Omega_{V}^{U}\rightarrow \Dnc_{\Vo}^{\Uo}$, where $\psi$ and $\tilde{\phi}$ are defined at (\ref{psi}) and (\ref{phi}) above.
\end{itemize}
\end{itemize}

Finally, we denote by $\src (\Dnc_{X}^{M})$ the set of functions 
$g\in \ci(\Dnc_{X}^{M})$ that are rapidly decaying at zero 
with conic compact support.
\end{definition}

\begin{remark}\label{remsrc}
\noindent
\begin{itemize}
\item[{\it (a)}] Obviously $\ci_c(\Dnc_{X}^{M})$ is a subspace of $\src (\Dnc_{X}^{M})$.
\item[{\it (b)}] Let 
$\{ (\Uo_{\alpha},\phi_{\alpha}) \}_{\alpha \in \Delta}$ be a family of
$X-$slices covering $X$. We have a decomposition of $\src(\Dnc_{X}^{M})$ as follows (see remark 4.5 in \cite{Ca2} and discussion below it):
\begin{align}\label{decomposicion}
\src (\Dnc_{X}^{M}) = \sum_{\alpha \in \Lambda} \src (\Dnc_{\Vo_{\alpha}}^{\Uo_{\alpha}}) 
+ \ci_c (M\times \Rr^*).
\end{align}
\end{itemize}
\end{remark}

The main theorem in \cite{Ca2} (Theorem 4.10) is the following

\begin{theorem}
The space $\src (\gr^T)$ is stable under convolution, and we have the following inclusions of algebras
$$\ct \subset \src (\gr^T) \subset C_r^*(\gr^T)$$ 
Moreover, $\src (\gr^T)$ is a field of algebras over $\Rr$, whose fibers are
$$\sw (A\gr) \text{ at } t=0, \text{ and }$$
$$\cg \text{ for } t\neq 0.$$  
\end{theorem}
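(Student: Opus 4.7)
The strategy is to use the decomposition of $\src(\gt)$ furnished by Remark~\ref{remsrc}(b), which reduces all claims to a local analysis inside an $X$-slice chart plus an easy analysis on the open piece $\ci_c(M \times \Rr^*)$. Inside an $X$-slice chart the diffeomorphism $\varphi = \tilde{\phi}^{-1}\circ\psi$ transports $\src(\Dnc^{\Uo}_{\Vo})$ onto $\src(\Omega^U_V)$, where the defining conditions $(s_1)$ are explicit Schwartz-type inequalities. The proof then has three parts: convolution stability, the two algebra inclusions, and identification of the fibers.

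\textbf{Stability under convolution.} For $t\neq 0$ the convolution on $\gt$ reads
\[ (f*g)(\gamma,t) = t^{-q}\int_{\gr_{s(\gamma)}} f(\gamma\eta^{-1},t)\, g(\eta,t)\, d\mu_{s(\gamma)}(\eta), \]
where the factor $t^{-q}$ comes from the Haar system of Remark~\ref{haartg}, while at $t=0$ it is the fiberwise convolution on the vector bundle $A\gr$. Working in a slice chart and performing the change of variable induced by $\psi(x,\xi,t) = (x,t\xi,t)$, the Jacobian $t^{q}$ cancels the $t^{-q}$ prefactor, so that the convolution becomes an integral over an algebroid fiber that depends \emph{smoothly} on $t$ through $t=0$ and specializes at $t=0$ to the vector-bundle convolution. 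Conic compactness of the support is preserved because multiplication in $\gr$ sends pairs near $\go\subset \gr$ to elements near $\go$; the Schwartz estimates $(s_1)$ propagate through the integral by standard Peetre/Leibniz arguments on Schwartz kernels, uniformly in $t$. The two purely away-from-zero summands in~(\ref{decomposicion}) are stable trivially, and cross terms have conic support bounded away from $t=0$.

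\textbf{Inclusions of algebras.} The inclusion $\ct \subset \src(\gt)$ is Remark~\ref{remsrc}(a). For $\src(\gt)\subset C^*_r(\gt)$ one shows that each $f\in \src(\gt)$ acts as a bounded operator in the regular representation on $L^{2}((\gt)_{(x,t)})$ uniformly in $(x,t)$: the kernel $f$ has uniform Schwartz decay in the fiber directions and uniform compact conic support in the base, so a Schur-test estimate on the family $\{\mu_{(x,t)}\}$ gives a uniform $L^2$-bound independent of the unit.

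\textbf{Field structure and fibers.} The evaluation maps $e_t$ are algebra morphisms onto their images because the product on $\gt$ restricts fiberwise to the products on $\gr$ (at $t\neq 0$) and on $A\gr$ (at $t=0$). For $t\neq 0$, the conic compact support condition, intersected with the slice $\{t\}$, becomes ordinary compact support in $\gr$, so $e_t(\src(\gt)) = \cg$ (surjectivity follows by multiplying a lift by a bump in the $t$ direction). At $t=0$, restriction to the local model yields a smooth function on $A\gr$ with compact support in the base direction and Schwartz decay in the fiber direction, which is exactly the description of $\sw(A\gr)$; again surjectivity is clear by a cutoff-extension argument.

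\textbf{Main obstacle.} The real work is verifying the Schwartz condition $(s_1)$ for $f*g$ uniformly down to $t=0$. The rescaling $\xi\mapsto t\xi$ intertwines derivatives in $t$ with derivatives in $\xi$, so one must carefully commute $\partial_t$ with the integral and rewrite the resulting terms using the Leibniz rule so that every polynomial weight $(1+\|\xi\|^2)^k$ can be absorbed by the rapid decay of $f$ or $g$; the decomposition~(\ref{decomposicion}) is essential to localize this bookkeeping, since globally the smooth structure of $\Dnc^{\gr}_{\go}$ is only controlled through the slice atlas constructed in~\cite{Ca2}.
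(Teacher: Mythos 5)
The paper does not actually prove this statement: it is recalled verbatim as Theorem 4.10 of \cite{Ca2}, and the reader is referred there ``for complete details,'' so there is no in-text proof here to compare yours against. Your outline --- localize via part {\it (b)} of Remark \ref{remsrc}, transport everything to $\Omega_V^U$ by $\varphi=\tilde{\phi}^{-1}\circ\psi$, observe that the Jacobian $t^{q}$ of $\xi\mapsto t\xi$ cancels the $t^{-q}$ in the Haar system of Remark \ref{haartg}, and read off the fibers $\sw(A\gr)$ and $\cg$ by restricting the support and decay conditions to a fixed $t$ --- is exactly the standard route, and it is the one the cited reference follows.

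That said, as written your text is a roadmap rather than a proof, and the two places where it is thinnest are precisely where the content of the theorem lives. First, the verification of $(s_1)$ for $f*g$ uniformly down to $t=0$ is asserted (``standard Peetre/Leibniz arguments'') and then re-described in your ``main obstacle'' paragraph, but never carried out; since $\partial_t$ applied to a function of the form $(x,\xi,t)\mapsto f(x,t\xi,t)$ produces terms involving $\xi\cdot(\partial_\xi f)(x,t\xi,t)$, one must check that the polynomial growth in $\xi$ so created is genuinely absorbed by the rapid decay of the other factor inside the convolution integral, with constants independent of $t$ --- this is the whole reason for introducing $\src$ in place of $\ci_c$, and it is the estimate that actually has to be proved. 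Second, the decomposition (\ref{decomposicion}) does not immediately reduce the convolution to a single chart: the product of an element supported in $\Dnc_{\Vo_{\alpha}}^{\Uo_{\alpha}}$ with one supported in $\Dnc_{\Vo_{\beta}}^{\Uo_{\beta}}$ for $\alpha\neq\beta$ has no reason to lie in either chart, so an additional step is needed (a partition of unity subordinate to a common refinement, or a direct global control of $supp\,(f*g)$ via $\mu_1$ of the product of the supports, as in the uniform-support discussion for $\Psi^{-\infty}(\gr)$) before the local computation applies. Neither issue invalidates the strategy, but both must be addressed for the statement to be proved rather than plausibly described.
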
  

In the statement of this theorem, $\sw (A\gr)$ denotes the Schwartz algebra over the Lie algebroid. Let us briefly recall the notion of Schwartz space associated to a
vector bundle: For a trivial bundle $X\times \Rr^q\rightarrow X$, $\sw(X\times \Rr^q):=\ci_c(X,\sw(\Rr^q))$ (see \cite{trev}). In general, $\sw(E)$ is defined using local charts. More precisely, a partition of the unity argument, allows to see that if we take a covering of $X$, $\{(\Vo_{\alpha},\tau_{\alpha})\}_{\alpha \in \Delta}$, consisting on trivializing charts, then we have a decomposition of the following kind:
\begin{equation}\label{des}
\sw(E)= \sum_{\alpha}\sw (\Vo_{\alpha}\times \Rr^q).
\end{equation}

The "Schwartz algebras" have in general the good $K-$theory
groups. As we said in the introduction, we are
interested in the group $K^0(A^*\gr)=K_0(C_0(A^*\gr))$. It is not 
enough to take the $K-$theory of $\ci_c(A\gr)$ (see
example \ref{exConn}). As we showed in \cite{Ca4} (proposition 4.5), $\sw (A^*\gr)$ has the wanted $K$-theory, {\it i.e.}, 
$K^0(A^*\gr)\cong K_0(\sw(A\gr))$. In particular, our deformation algebra restricts at zero to the right algebra. 

From now on it will be important to restrict our functions on the tangent groupoid to the closed interval 
$[0,1]$. We keep the notation $\src (\Dnc_{X}^{M})$ for the restricted space. All the results above remain true. So for instance $\src (\gr^T)$ is an algebra which is a field of algebras over the closed interval $[0,1]$ with 0-fiber $\sw(A\gr)$ and $\ci_c(\gr)$ otherwise.

We have the following short exact sequence of algebras (\cite{Ca4}, proposition 4.6):
\begin{align}\label{se}
0 \longrightarrow J \longrightarrow \src (\gr^T)
\stackrel{e_0}{\longrightarrow} \sw (A\gr) \longrightarrow 0,
\end{align}
where $J=Ker(e_0)$ by definition.

\section{Higher localized indices}
\begin{definition}
Let $\tau$ be a (periodic) cyclic cocycle over $\cg$. We say that $\tau$ can be localized if 
the correspondence
\begin{equation}
\xymatrix{
Ell(\gr)\ar[r]^-{ind}&K_0(\cg)\ar[r]^-{\langle\, \_, \tau \rangle}&\mathbb{C}
}
\end{equation}
factors through the principal symbol class morphism. In other words, if there is a unique morphism $K^0(A^*\gr)\stackrel{Ind_{\tau}}{\longrightarrow}\mathbb{C}$ which fits in the following commutative diagram
\begin{equation}
\xymatrix{
Ell(\gr)\ar[r]^-{ind} \ar[d]_{[psymb]}&K_0(\cg)\ar[r]^-{\langle\, \_, \tau \rangle}&\mathbb{C}\\
K^0(A^*\gr)\ar[rru]_-{Ind_{\tau}}&&
}
\end{equation}
{\it i.e.}, satisfying $Ind_{\tau}(a)=\langle ind\, D_a,\tau \rangle$, and hence completely characterized by this property. 
In this case, we call $Ind_{\tau}$ the higher localized index associated to $\tau$.
\end{definition}

\begin{remark}
If a cyclic cocycle can be localized then the higher localized index $Ind_{\tau}$ is completely characterized by the property: $Ind_{\tau}([\sigma_D])=\langle ind\, D,\tau \rangle$, $\forall D\in Ell(\gr)$.
\end{remark}

We are going to prove first a localization result for Bounded cyclic cocycles, we recall its definition.

\begin{definition}\label{defBCC}
A multilinear map $\tau:\underbrace{\cg \times \cdots \times \cg}_{q+1-times}
\rightarrow \mathbb{C}$ is bounded if it extends to a continuous multilinear map 
$\underbrace{\ck \times \cdots \times \ck}_{q+1-times}\stackrel{\tau_k}{\longrightarrow}\mathbb{C}$, for some $k\in \mathbb{N}$. 
\end{definition}

We can re-state theorem 6.9 in \cite{Ca4} in the following way:

\begin{theorem}\label{teotesis}
Let $\gr\rightrightarrows \go$ be a Lie groupoid, then\\
\begin{itemize}
\item[{\it (i)}]Every bounded cyclic cocycle over $\cg$ can be localized. \\

\item[{\it (ii)}]Moreover, if the groupoid is {\'e}tale, then every cyclic cocycle can be localized.
\end{itemize}
\end{theorem}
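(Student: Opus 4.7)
The strategy is to bridge the symbol class and the index through the strict deformation algebra $\src(\gt)$. Given $a \in K^0(A^*\gr) \cong K_0(\sw(A\gr))$, I would first lift it through the surjection $e_0: \src(\gt) \twoheadrightarrow \sw(A\gr)$ of the short exact sequence (\ref{se}). The existence of such a lift $\widetilde{a} \in K_0(\src(\gt))$ is the tangent-groupoid reformulation of the analytic index: mimicking at the Schwartz level the $C^*$-level diagram (\ref{eldiagramaevaluadoestrella}), for any elliptic $D$ with $[\sigma(D)] = a$ an explicit idempotent-difference in matrices over $\src(\gt)^+$ (built from a parametrix regular enough to live in $\src(\gt)$) gives a class $\widetilde{a}$ with $e_0(\widetilde{a}) = a$ and $e_1(\widetilde{a}) = ind\, D$.

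Next I would analyze the function $F_{\widetilde{a}}(t) := \langle \widetilde{a}, \tau_t \rangle$ for $t \in (0,1]$. Since $\src(\gt)|_{(0,1]}$ is, after the scaling of Remark \ref{haartg}, isomorphic to a trivial smooth field with fiber $\cg$, the evaluation homomorphisms $e_t$ for $t \in (0,1]$ are mutually homotopic as algebra maps, so $(e_t)_*$ agrees on $K$-theory and $F_{\widetilde{a}}(t)$ is constant on $(0,1]$, with common value $\langle e_1(\widetilde{a}), \tau \rangle = \langle ind\, D, \tau \rangle$ for the particular lift constructed above. This already gives the last assertion of the theorem and the formula for $t > 0$.

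The main technical step, which I expect to be the hardest, is showing that $\lim_{t\to 0^+} F_{\widetilde{a}}(t)$ exists and depends only on $a$. Here the boundedness of $\tau$ is decisive: if $\tau$ extends continuously to a $(q+1)$-linear form $\tau_k$ on $\ck$, then for any idempotent representative $p \in M_N(\src(\gt)^+)$, the conic-support and decay conditions $(s_1)$ of Definition \ref{ladef} ensure that $t \mapsto p_t$ is continuous from $[0,1]$ into the $C^k$-topology on a fixed compact subset of $\gr$. Since the Connes pairing is given by a universal polynomial in the entries of $p$, $F_{\widetilde{a}}$ extends continuously to $[0,1]$. Moreover, the limit value factors through $e_0(\widetilde{a})$: two lifts of $a$ differ by a class in the image of $K_0(J) \to K_0(\src(\gt))$ from (\ref{se}), represented by families whose restriction at $t=0$ vanishes, and by $C^k$-continuity the pairing of such an element with $\tau_t$ tends to $0$. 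Combined with constancy on $(0,1]$, this both well-defines $Ind_\tau(a)$ and proves formula (\ref{tauindlocintro}), establishing (i).

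For (ii), I would invoke the explicit description of $HP^*(\cg)$ for étale groupoids due to Brylinski-Nistor \cite{BN} and Crainic \cite{Cra}, which shows that every periodic cyclic cohomology class is represented by a cocycle of the bounded type of Definition \ref{defBCC}. Part (i) then applies class by class, yielding localization of every cyclic cocycle in the étale setting. The principal obstacle in the whole argument will be the interchange of the limit $t \to 0^+$ with the pairing in step three, which requires exploiting the precise conic-compactness and decay built into the definition of $\src(\gt)$ rather than relying on any soft functional-analytic continuity.
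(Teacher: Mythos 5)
Your overall architecture (lift $a$ through $e_0$, constancy of $\langle\widetilde a,\tau_t\rangle$ on $(0,1]$, then independence of the lift) is reasonable, and your treatment of part \emph{(ii)} via the Brylinski--Nistor/Crainic description of $HP^*(\cg)$ matches the paper's Corollary \ref{corindloc}. But the decisive step --- that the pairing depends only on $a=e_0(\widetilde a)$ --- has a genuine gap. You reduce it to the claim that a class in the image of $K_0(J)\to K_0(\src(\gt))$, with $J=\ker e_0$ as in (\ref{se}), pairs with $\tau_t$ to something tending to $0$ as $t\to 0$ ``by $C^k$-continuity''. This continuity fails: for $f\in J$ the evaluations $e_t(f)\in\cg$ have sup-norm $O(t)$, but in the deformation coordinates $\psi(x,\xi,t)=(x,t\xi,t)$ each derivative transverse to the unit space contributes a factor $t^{-1}$, so $\|e_t(f)\|_{C^k}$ is in general only $O(t^{1-k})$ and blows up for $k\geq 2$. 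Hence $t\mapsto p_t$ is not continuous into the $C^k$-topology at $t=0$, and a bounded cocycle of order $k\geq 2$ gives no control on $\langle x,\tau_t\rangle$ for $x\in K_0(J)$ by this route. That some genuinely new input is needed is also visible from Example \ref{exConn}: at the $\ci_c$-level the index does not factor through the symbol, so $(e_t)_*$ is not zero on $K_0(J)$ and one cannot hope for a soft vanishing argument.

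The paper handles exactly this point with different machinery: it introduces the group $K_0^B(\gr)=\varinjlim_m\varprojlim_k K_0^{h,k}(\gr\times\Rr^{2m})$, built from $C^k$-homotopy quotients of $K_0(\ck)$, projective limits over $k$, and Bott stabilization, and invokes Theorem 5.4 of \cite{Ca4} for the existence and uniqueness of $ind_a^B:K^0(A^*\gr)\to K_0^B(\gr)$ fitting into diagrams (\ref{eldiagramaevaluado}) and (\ref{eldiagrama}); the independence of the lift is absorbed into the definition of $K_0^B(\gr)$, where two lifts become identified because their evaluations are $C^k$-homotopic after stabilization, for every $k$. Boundedness of $\tau$ is then used only to extend the pairing from $K_0(\cg)$ to $K_0^B(\gr)$ (Proposition 6.7 of \cite{Ca4}, diagram (\ref{pairF})), which is where $C^k$-continuity legitimately enters. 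To salvage your direct argument you would need to prove, not assume, that the evaluation at $t=1$ of a lift of $a$ is well defined up to $C^k$-homotopy after Bott stabilization for each $k$ --- which is precisely the content of the cited theorem, not a consequence of the decay conditions $(s_1)$ alone.
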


We will recall the main steps for proving this result. For this purpose we need to define the intermediate group 
$$K_0(\cg)\rightarrow K_0^B(\gr)\rightarrow K_0(C_r^*(\gr)).$$
Let us denote, for each $k\in \mathbb{N}$, $K_{0}^{h,k}(\gr)$ the quotient group of $K_0(\ck)$ by the equivalence relation induced by $\tiny{K_0(\ckt)\overset{e_0}{\underset{e_1}{\rightrightarrows}}K_0(\ck)}$. Let $K_{0}^{F}(\gr)= \varprojlim_{k}K_{0}^{h,k}(\gr)$ be the projective limit relative to the inclusions $\ck \subset
C_{c}^{k-1}(\gr)$. 
We can take the inductive limit $$\varinjlim_{m} K_{0}^{F}(\gr \times \Rr^{2m})$$ 
induced by $K_{0}^{F}(\gr \times \Rr^{2m})\stackrel{Bott}{\longrightarrow}
K_{0}^{F}(\gr \times \Rr^{2(m+1)})$ (the Bott morphism). We denote this group by 
\begin{equation}\label{KBF}
K_{0}^{B}(\gr):=\varinjlim_{m} K_{0}^{F}(\gr \times \Rr^{2m}),
\end{equation}

Now, theorem 5.4 in \cite{Ca4} establish the following two assertions:
\begin{enumerate}

\item There is a unique group morphism
$$ ind_{a}^{B}: K^0(A^*\gr)\rightarrow K_{0}^{B}(\gr) $$
that fits in the following commutative diagram
\begin{equation}\label{eldiagramaevaluado}
\xymatrix{
&K_0(\src(\gr^T))\ar[ld]_{e_0} \ar[rd]^{e_1^B}&\\
K^0(A^*\gr)\ar[rr]_{ind_{a}^{B}}&&K_{0}^{B}(\gr), 
}
\end{equation}
where $e_1^B$ is the evaluation at one $K_0(\src(\gt))\stackrel{e_1}{\longrightarrow}K_0(\cg)$ followed by the canonical map $K_0(\cg)\rightarrow K_0^B(\gr)$.

\item This morphism also fits in the following commutative diagram
\begin{equation}\label{eldiagrama}
\xymatrix{
Ell(\gr) \ar[d]_{\sigma} \ar[r]^{ind}&
K_0(\ci_c(\gr)) \ar[d] & \\ K^0(A^*\gr) \ar[d]_{id} \ar[r]^{ind_{a}^{B}}& K_{0}^{B}(\gr) \ar[d] & \\
K^0(A^*\gr) \ar[r]_{ind_a}& K_0(C_{r}^{*}(\gr)) & .
}
\end{equation}
\end{enumerate}

Next, it is very easy to check (see \cite{Ca4} Proposition 6.7) that if $\tau$ is a bounded cyclic cocycle, then the pairing morphism 
$K_0(\cg)\stackrel{\langle \, ,\tau \rangle}{\longrightarrow}$ extends to $K_0^B(\gr)$, {\it i.e.}, we have a commutative diagram of the following type:
\begin{equation}\label{pairF}
\xymatrix{
K_0(\cg) \ar[r]^-{<,\tau>} \ar[d]_-{\iota} & \mathbf{C} \\
K_0^B(\gr) \ar[ru]_-{\tau_B} &
}
\end{equation}

Now, theorem \ref{teotesis} follows immediately because we can put together diagrams (\ref{eldiagrama}) and (\ref{pairF}) to get the following commutative diagram
\begin{equation}\label{eldiagramacompleto}
\xymatrix{
Ell(\gr) \ar[r]^-{ind} \ar[d]_-{\sigma} & K_0(\cg) \ar[d] \ar[rr]^-{\langle \_ , \tau \rangle} & & \mathbb{C} \\
K^0(A^*\gr) \ar[r]_-{ind_{a}^{B}}& K_{0}^{B}(\gr) \ar[urr]_-{\tau_B} &  &.
}
\end{equation}

\subsection{Higher localized index formula}

In this section we will give a formula for the Higher localized indices in terms of a pairing in the strict deformation quantization algebra $\src(\gt)$. We have first to introduce some notation :

Let $\tau$ be a $(q+1)-$multilinear functional over $\cg$. For each $t\neq 0$, we let 
$\tau_t$ be the $(q+1)$-multilinear functional over $\src (\gt)$ defined by
\begin{equation}
\tau_t(f^0,...,f^q):=\tau(f^0_t,...,f^q_t).
\end{equation}

In fact, if we consider the evaluation morphisms 
$$e_t:\src(\gt)\rightarrow \cg,$$
for $t\neq0$, then it is obvious that $\tau_t$ is a $(b,B)$-cocycle (periodic cyclic cocycle) over $\src(\gt)$ if $\tau$ is a $(b,B)$-cocycle over $\cg$. Indeed, $\tau_t=e_t^*(\tau)$ by definition.

We can now state the main theorem of this article.

\begin{theorem}\label{teoindloc}
Let $\tau$ be a bounded cyclic cocycle then the higher localized index of $\tau$, $K^0(A^*\gr)\stackrel{Ind_{\tau}}{\longrightarrow}\mathbb{C}$, 
is given by
\begin{equation}\label{tauindloc}
Ind_{\tau}(a)=lim_{t\rightarrow 0}\langle \widetilde{a},\tau_t \rangle,
\end{equation}
where $\widetilde{a}\in K_0(\src(\gt))$ is such that  $e_0(\widetilde{a})=a\in K^0(A^*\gr)$.
In fact the pairing above is constant for $t\neq 0$.
\end{theorem}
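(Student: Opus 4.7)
The plan is to exploit the field-of-algebras structure of $\src(\gt)$ over $[0,1]$, together with the factorization of the pairing $\langle \_,\tau\rangle$ through the intermediate group $K_0^B(\gr)$ provided by (\ref{pairF}), and to transport the computation from $t=0$ (where the class $a$ lives) to $t=1$ (where $Ind_\tau(a)$ is computed) through a path of classes that is constant in $K_0^B(\gr)$.

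First, one needs to check that such a lift $\widetilde{a}$ exists. This is the content of diagram (\ref{eldiagramaevaluado}): the map $e_0\colon K_0(\src(\gt))\to K_0(\sw(A\gr))\cong K^0(A^*\gr)$ is surjective (indeed, the whole construction of $ind_a^B$ in \cite{Ca4} relies on this surjectivity, exactly as in the $C^*$-algebraic analogue (\ref{eldiagramaevaluadoestrella})). Pick any $\widetilde{a}\in K_0(\src(\gt))$ mapping to $a$. By definition of $\tau_t = e_t^*(\tau)$, one has
\begin{equation}
\langle \widetilde{a},\tau_t\rangle = \langle (e_t)_*(\widetilde{a}),\tau\rangle,\qquad t\in(0,1].
\end{equation}

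The heart of the argument is to show that the class $\iota\bigl((e_t)_*(\widetilde{a})\bigr)\in K_0^B(\gr)$ is independent of $t\in(0,1]$. For any $0<s<t\le 1$, the restriction of a representative of $\widetilde{a}$ to $\gr\times[s,t]$ is smooth in the $[0,1]$-variable away from $0$, and the conic compact support condition of Definition \ref{src} forces the restriction to sit inside $\ck(\gr\times[s,t])$ for every $k$. This restricted class therefore furnishes a $C^k$-homotopy in the sense used to define $K_0^{h,k}(\gr)$, and by construction such homotopies are killed in $K_0^{h,k}(\gr)$, hence in $K_0^F(\gr)$ and finally in $K_0^B(\gr)$. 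Consequently, for every $t\in(0,1]$,
\begin{equation}
\iota\bigl((e_t)_*(\widetilde{a})\bigr) = \iota\bigl((e_1)_*(\widetilde{a})\bigr) \in K_0^B(\gr),
\end{equation}
and hence, using the extension $\tau_B$ of $\langle\_,\tau\rangle$ granted by (\ref{pairF}) (this is the only place where boundedness of $\tau$ enters),
\begin{equation}
\langle (e_t)_*(\widetilde{a}),\tau\rangle = \tau_B\bigl(\iota((e_t)_*(\widetilde{a}))\bigr)
\end{equation}
is constant in $t\in(0,1]$.

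To evaluate this constant, specialize to $t=1$. Diagram (\ref{eldiagramaevaluado}) tells us $\iota\bigl((e_1)_*(\widetilde{a})\bigr)=e_1^B(\widetilde{a})=ind_a^B(e_0(\widetilde{a}))=ind_a^B(a)$. Inserting this into the master diagram (\ref{eldiagramacompleto}) yields $\tau_B(ind_a^B(a))=Ind_\tau(a)$. Therefore $\langle\widetilde{a},\tau_t\rangle=Ind_\tau(a)$ for every $t\neq 0$, and the $t\to 0$ limit is trivial. The main obstacle is the technical verification that the restriction of elements of $\src(\gt)$ to a sub-interval $[s,t]\subset(0,1]$ indeed defines a bona fide $C^k$-homotopy giving equality in $K_0^{h,k}(\gr)$; this hinges on the precise definition of the Schwartz-type decay in Definition \ref{src} and on the explicit description of the equivalence relation defining $K_0^{h,k}(\gr)$ via $\ckt$.
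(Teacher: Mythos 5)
Your argument is correct, but the key step is carried out by a genuinely different mechanism than the paper's. The paper isolates the constancy of $t\mapsto\langle\widetilde{a},\tau_t\rangle$ in Lemma \ref{lemat}, proved directly at the level of the deformation algebra: a class in $K_0(\src(\gt))$ gives a smooth family of idempotents $p_t$, and the derivative of $\langle\tau,p_t\rangle$ is a Lie derivative $L_{a_t}\tau$ evaluated on $(p_t,\dots,p_t)$, which vanishes because Lie derivatives act trivially on $HP^0$; the endpoint value $\langle\widetilde{a},\tau_1\rangle=Ind_\tau(a)$ is then read off from the same diagram chase you perform. You instead push the whole family $(e_t)_*(\widetilde{a})$ into $K_0^B(\gr)$, observe that the restriction of a representative to $\gr\times[s,t]\subset\gr\times(0,1]$ is an honest element of $\ci_c(\gr\times[s,t])$ (the conic support condition of Definition \ref{src} does guarantee this) and hence a $C^k$-homotopy killed in $K_0^{h,k}(\gr)$ for every $k$, and then apply $\tau_B$ from (\ref{pairF}). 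Both routes are valid, and the endpoint evaluation via (\ref{eldiagramaevaluado}) and (\ref{eldiagramacompleto}) is identical. The trade-off: your route reuses the $K_0^B$ machinery and makes it very transparent that boundedness is the only hypothesis needed, but it makes boundedness essential even for the constancy statement; the paper's Lemma \ref{lemat} needs no boundedness at all, which is what justifies the closing remark of the paper that $\langle\widetilde{a},\tau_t\rangle$ is constant for $t\neq0$ (so that the difference $\eta_\tau(D)$ makes sense) for \emph{arbitrary} cyclic cocycles, something your argument would not deliver.
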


\begin{remark}
Hence, if $\tau$ is a bounded cyclic cocycle and $D$ is a $\gr$-pseudodifferential elliptic operator, then we have the following formula for the pairing: 
\begin{equation}
\langle ind\, D,\tau \rangle=\langle \widetilde{\sigma_D},\tau_t \rangle,
\end{equation}
for each $t\neq 0$, and where $\widetilde{\sigma_D}\in K_0(\src(\gt))$ is such that $e_0(\widetilde{\sigma})=\sigma_D$. In particular, 
\begin{equation}
\langle ind\, D,\tau \rangle=lim_{t\rightarrow 0}\langle \widetilde{\sigma_D},\tau_t \rangle.
\end{equation}
\end{remark}

For the proof of the theorem above we will need the following lemma.

\begin{lemma}\label{lemat}
For $s,t \in (0,1]$, $\tau_s$ and $\tau_t$ define the same pairing map 
$$K_0(\src(\gt))\longrightarrow \mathbb{C}.$$
\end{lemma}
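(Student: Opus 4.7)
The plan is to reduce the lemma to the classical fact that a continuous cyclic cocycle pairs trivially with infinitesimal variations of idempotents, so that the pairing is invariant under smooth homotopies of projections.

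By naturality of the Connes pairing together with the identity $\tau_r = e_r^*(\tau)$, we have for every $r \in (0,1]$ and every $\tilde a \in K_0(\src(\gt))$,
\begin{equation*}
\langle \tilde a,\tau_r\rangle = \langle (e_r)_*(\tilde a),\tau\rangle.
\end{equation*}
I would represent $\tilde a$ by an idempotent $p \in M_N(\widetilde{\src(\gt)})$; then $p_r := e_r(p) \in M_N(\widetilde{\cg})$ is an idempotent for every $r\in (0,1]$ and represents $(e_r)_*(\tilde a)$. Writing $\tau^{\#}$ for the canonical extension of $\tau$ to matrices over the unitization (via the matrix trace), the pairing becomes, up to the usual normalizing constant $c_q$,
\begin{equation*}
\langle \tilde a,\tau_r\rangle = c_q\,\tau^{\#}\bigl(p_r-\iota,p_r,\ldots,p_r\bigr).
\end{equation*}

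The first step is to observe that $r \mapsto p_r$ is a smooth path of idempotents with values in $M_N(\widetilde{\ck})$, where $k$ is the integer afforded by the boundedness hypothesis on $\tau$: elements of $\src(\gt)$ are by definition smooth on $\gt$, so the entries of $p$ depend smoothly on $r$, and their $\ck$-seminorms are uniformly bounded on compact subintervals of $(0,1]$. Since $\tau$ extends to a continuous $(q+1)$-linear form on $\ck^{q+1}$ by definition of boundedness, the scalar function
\begin{equation*}
\phi(r) := \tau^{\#}(p_r-\iota,p_r,\ldots,p_r)
\end{equation*}
is smooth on $(0,1]$.

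Finally I would compute $\phi'(r)$ by term-by-term differentiation and apply the Hochschild coboundary identity $b\tau^{\#}=0$ and the cyclic invariance of $\tau^{\#}$, together with the relation $\dot p_r = \dot p_r p_r + p_r \dot p_r$ obtained from differentiating $p_r^2 = p_r$. This is the classical computation showing that a continuous cyclic cocycle pairs trivially with the infinitesimal generator of a smooth family of idempotents; it yields $\phi'(r)=0$ for every $r \in (0,1]$. Hence $\phi$ is constant, and consequently $\langle \tilde a,\tau_s\rangle = \phi(s) = \phi(t) = \langle \tilde a,\tau_t\rangle$ for all $s,t\in(0,1]$, proving the lemma. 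The main technical point---and the reason the boundedness hypothesis on $\tau$ is indispensable---is that $\tau^{\#}$ must be a \emph{continuous} multilinear form on an algebra in which the smooth path $r\mapsto p_r$ lives, so that differentiating inside $\tau^{\#}$ is legitimate; this is exactly what the extension $\tau_k$ on $\ck$ supplies, and it is precisely what distinguishes this situation from the general $\ci_c$-level obstruction which motivated the introduction of $K_0^B(\gr)$ earlier in the paper.
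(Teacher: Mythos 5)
Your proof is correct and follows essentially the same route as the paper: both represent the class by an idempotent $p$, form the smooth path $p_t=e_t(p)$, and show that $\frac{d}{dt}\langle\tau,p_t\rangle=0$ using the cocycle identity, so the pairing is constant on $(0,1]$. The paper packages the vanishing of this derivative as the statement that the Lie derivative $L_{a_t}$, with $a_t=\frac{dp_t}{dt}(2p_t-1)$, acts trivially on $HP^0$ (citing Connes and Goodwillie), which is precisely the explicit $b\tau=0$ plus cyclic-invariance computation you propose to carry out by hand; your additional remark on why boundedness legitimizes differentiating inside $\tau$ is a point the paper leaves implicit.
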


\begin{proof}
Let $p$ be an idempotent in $\widetilde{\src(\gt)}=\src(\gt)\oplus\mathbb{C}$. It defines a smooth family of idempotents $p_t$ in $\widetilde{\src(\gt)}$. 
We set $a_t:=\frac{dp_t}{dt}(2p_t-1)$. 
Hence, a simple calculation shows 
$$\frac{d}{dt} \langle \tau ,p_t \rangle= \sum_{i=0}^{2n}\tau(p_t,...,[a_t,p_t],...,p_t)=:L_{a_t}\tau(p_t,...,p_t).$$
Now, {\it{the Lie derivatives}} $L_{x_t}$ act trivially on $HP^0(\src(\gt))$ (see \cite{Concdg,Good}), then $ \langle \tau ,p_t \rangle$ is constant in $t$. Finally, by definition, 
$\langle \tau_t ,p \rangle=\langle \tau ,p_t \rangle$. Hence $t\mapsto \langle \tau_t ,p \rangle$ is a constant function for $t\in (0,1]$.
\end{proof}

\begin{proof}[Proof of theorem \ref{teoindloc}]
Putting together diagrams (\ref{eldiagramaevaluado}) and (\ref{eldiagramacompleto}), we get the following commutative diagram
\begin{equation}\label{eldiagramalocal}
\xymatrix{
Ell(\gr) \ar[r]^-{ind} \ar[d]_-{\sigma} & K_0(\cg) \ar[d] \ar[rrd]^-{\langle \_ , \tau \rangle} & &  \\
K^0(A^*\gr) \ar[r]^-{ind_{a}^{B}}& K_{0}^{B}(\gr) \ar[rr]_-{\tau_B} &  &\mathbb{C}\\
&K_0(\src(\gt))\ar[u]_{e_1^B}\ar[rru]_{\tau_1}\ar[lu]^{e_0}&&.
}
\end{equation}
In other words, for $a\in K^0(A^*\gr)$, 
$Ind_{\tau}(a)=\langle \widetilde{a},\tau_1 \rangle.$
Now, by lemma \ref{lemat} we can conclude that
$$Ind_{\tau}(a)=\langle \widetilde{a},\tau_t \rangle,$$
for each $t\neq 0$. In particular the limit when $t$ tends to zero is given by 
$$Ind_{\tau}(a)=lim_{t\rightarrow 0}\langle \widetilde{a},\tau_t \rangle.$$
\end{proof}

For {\'e}tale groupoids, we can state the following corollary.

\begin{corollary}\label{corindloc}
If $\gr \rightrightarrows \go$ is an {\'e}tale groupoid, then formula (\ref{tauindloc}) holds for every cyclic cocycle.
\end{corollary}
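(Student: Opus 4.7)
The plan is to bootstrap from Theorem \ref{teoindloc}, which already establishes formula (\ref{tauindloc}) for every bounded cyclic cocycle, by showing that in the \'etale setting any cyclic cocycle is cohomologous to a bounded one. The key external input is the explicit description of $HP^*(\cg)$ for \'etale groupoids from Brylinski--Nistor \cite{BN} and Crainic \cite{Cra}, which is precisely what underlies part (ii) of Theorem \ref{teotesis}: every class in $HP^*(\cg)$ admits a representative that extends continuously to $\ck \times \cdots \times \ck$ for some $k$, hence is bounded in the sense of Definition \ref{defBCC}.

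First, I would point out that both sides of (\ref{tauindloc}) depend only on the periodic cyclic cohomology class of $\tau$. For the left-hand side this is automatic from the definition of the higher localized index as the pairing $\langle \mathrm{ind}\, D_a, \tau\rangle$, which factors through $HP^*(\cg)$. For the right-hand side, for each $t\in(0,1]$ the evaluation $e_t : \src(\gt) \to \cg$ is a morphism of algebras, so $\tau_t = e_t^*\tau$ is well defined at the level of periodic cyclic cohomology, and the pairing $\langle \widetilde{a}, \tau_t\rangle$ depends only on $[\tau]$.

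Next, given an arbitrary cyclic cocycle $\tau$ over $\cg$, the computation of $HP^*(\cg)$ for \'etale groupoids cited above produces a bounded cyclic cocycle $\tau'$ with $[\tau'] = [\tau]$ in $HP^*(\cg)$. By Theorem \ref{teoindloc} applied to $\tau'$, the function $t \mapsto \langle \widetilde{a}, \tau'_t\rangle$ is constant on $(0,1]$ and equals $\mathrm{Ind}_{\tau'}(a)$. Combining this with the cohomological invariance established in the previous step gives
\begin{equation}\nonumber
\mathrm{Ind}_\tau(a) \;=\; \mathrm{Ind}_{\tau'}(a) \;=\; \langle \widetilde{a}, \tau'_t\rangle \;=\; \langle \widetilde{a}, \tau_t\rangle
\end{equation}
for every $t\in(0,1]$, and in particular the limit as $t\to 0$ exists and equals $\mathrm{Ind}_\tau(a)$.

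The main obstacle is the reliance on the representative-by-bounded-cocycle assertion. This is not literally how \cite{BN,Cra} phrase their results, so the real work is to unpack their decomposition of $HP^*(\cg)$ (by orbit type / localization at conjugacy classes) and to verify that each summand in the decomposition can be realized by a cocycle which, after a suitable cutoff adapted to its support, extends by continuity to the $\ck$-topology. Once that is granted, the proof is a formal two-line consequence of Theorem \ref{teoindloc} and the Lie-derivative invariance argument already used in Lemma \ref{lemat}.
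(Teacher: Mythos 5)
Your strategy is essentially the paper's: reduce to the bounded case of Theorem \ref{teoindloc} via the Brylinski--Nistor/Crainic computation of $HP^*(\cg)$ for \'etale groupoids. Your explicit observation that both sides of (\ref{tauindloc}) depend only on the periodic cyclic cohomology class of $\tau$ (because $e_t^*$ is induced by an algebra morphism and hence commutes with $b$ and $B$) is correct, and is used implicitly in the paper as well; note also that the constancy in $t$ of $\langle \widetilde{a},\tau_t\rangle$ from Lemma \ref{lemat} never uses boundedness, so that part needs no modification.

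The one place where your argument overclaims is the assertion that every class in $HP^*(\cg)$ admits a \emph{single} bounded representative. The decomposition the paper invokes is an infinite product $HP^*(\cg)=\Pi_{\ops}H_{\tau}^{*+r}(B\Nb_{\ops})$: each factor consists of bounded cocycles, but a class with nonzero components in infinitely many factors need not be representable by a cocycle extending continuously to $\ck \times \cdots \times \ck$ for one fixed $k$, since the required $k$ may grow with $\ops$. You flag this as ``the real work,'' but the repair is not to prove your stronger claim; the paper sidesteps it. It observes that since the pairing $HP^*(\cg)\times K_0(\cg)\rightarrow \mathbb{C}$ is well defined, the contribution of $HP^*(\cg)|_{\ops}$ to the pairing with a fixed $K$-theory class vanishes for all but finitely many $\ops$. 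Hence both $Ind_{\tau}(a)$ and $\langle \widetilde{a},\tau_t\rangle$ reduce to the same finite sum of pairings with bounded cocycles, to each of which Theorem \ref{teoindloc} applies. Substituting this factor-by-factor finiteness argument for your ``cohomologous to a bounded cocycle'' step closes the gap, and the rest of your proof goes through unchanged.
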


\begin{proof}
Thanks to the works of Burghelea, Brylinski-Nistor and Crainic (\cite{Bur,BN,Cra}), we know a very explicit description of the Periodic cyclic cohomology for {\'e}tale groupoids. For instance, we have a decomposition of the following kind (see for example \cite{Cra} theorems 4.1.2. and 4.2.5)
\begin{equation}\label{Crainic}
HP^*(\cg)=\Pi_{\ops}H_{\tau}^{*+r}(B\Nb_{\ops}),
\end{equation}
where $\Nb_{\ops}$ is an {\'e}tale groupoid associated to $\ops$ (the normalizer of 
$\ops$, see 3.4.8 in ref.cit.). For instance, when $\ops=\go$, $\Nb_{\ops}=\gr$.

Now, all the cyclic cocycles coming from the cohomology of the classifying space are bounded. Indeed, we know that each factor of 
$HP^*(\cg)$ in the decomposition (\ref{Crainic}) consists of bounded cyclic cocycles (see last section of \cite{Ca4}). 
Now, the pairing 
$$HP^*(\cg)\times K_0(\cg)\longrightarrow \mathbb{C}$$
is well defined. In particular, the restriction to $HP^*(\cg)|_{\ops}$ vanishes for almost every $\ops$. The conclusion is now immediate from the theorem above.
\end{proof}

Once we have the formula (\ref{tauindloc}) above, it is well worth it to recall why the evaluation morphism 
\begin{equation}\label{evaluationzero}
K_0(\src(\gt))\stackrel{e_0}{\longrightarrow}K^0(A^*\gr)
\end{equation}
is surjective. Let $[\sigma] \in K_0(\sw (A\gr))=K^0(A^*\gr)$. 
We know from the $\gr$-pseudodifferential calculus that $[\sigma]$ can be represented by a smooth homogeneous elliptic symbol (see \cite{AS,Connes_Higson,MP,NWX}). We can consider the symbol over $A^*\gr
\times [0,1]$ that coincides with $\sigma$ for all $t$, we denote it 
by $\tilde{\sigma}$. Now, since
$A\gr^T=A\gr
\times [0,1] $, we can take $\tilde {P}=(P_t)_{t\in [0,1]}$ a 
$\gr^T$-elliptic pseudodifferential operator associated to $\sigma$, that is,
$\sigma_{\tilde{P}}=\tilde{\sigma}$. Let
$i:\ci_c (\gr^T) \rightarrow \src (\gr^T)$ be the inclusion (which is an algebra morphism), then 
$i_*(ind\, \tilde{P}) \in K_0(\src
(\gr^T))$ is such that 
$e_{0,*}(i_*(ind\, \tilde{P}))=[\sigma]$. Hence, the lifting of a principal symbol class is given by the index of $\tilde {P}=(P_t)_{t\in [0,1]}$ and theorem \ref{teoindloc} says that the pairing with a bounded cyclic cocycle does not depend on the choice of the operator $P$. Now, for compute this index, as in formula (\ref{index}), one should find first a parametrix for the family $\tilde {P}=(P_t)_{t\in [0,1]}$.

For instance, in \cite{CMnov} (section 2), Connes-Moscovici consider elliptic differential operators over compact manifolds, let us say an operator $D\in DO^r(M;E,F)^{-1}$. Then they consider the family of operators $tD$ (multiplication by $t$ in the normal direction) for $t>0$ and they construct a family of parametrix $\widetilde{Q}(t)$. The corresponding idempotent is then homotopic to  $W(tD)$, where
\begin{equation}
W(D)=\left( 
\begin{array}{cc}
e^{-D^*D} & e^{-\frac{1}{2}D^*D}(\frac{I-e^{-D^*D}}{D^*D})^{\frac{1}{2}}D^*\\
e^{-\frac{1}{2}DD^*}(\frac{I-e^{-DD^*}}{DD^*})^{\frac{1}{2}}D & I-e^{-D^*D}
\end{array}
\right)
\end{equation}
is the Wasserman idempotent. In the language of the tangent groupoid, the family $\tilde{D}=\{D_t\}_{t\in [0,1]}$ where $D_0=\sigma_D$ and $D_t=tD$ for $t>0$, defines a $\gt$-differential elliptic operator. What Connes and Moscovici compute is precisely the limit on right hand side of formula (\ref{tauindloc}).

Also, in \cite{MW} (section 2), Moscovici-Wu proceed in a similar way by using the finite propagation speed property to construct a parametrix for operators $\tilde{D}=\{D_t\}_{t\in [0,1]}$ over the tangent groupoid. Then they obtain as associated idempotent the so called graph projector. What they compute after is again a particular case of the right hand side of (\ref{tauindloc}).

Finally, in \cite{GorLotteg} (section 5.1), Gorokhovsky-Lott use the same technics as the two previous examples in order to obtain their index formula. 

\begin{remark}
As a final remark is interesting to mention that in the formula (\ref{tauindloc}) both sides make always sense. In fact the pairing 
$$\langle \widetilde{a},\tau_t \rangle$$
is constant for $t\neq 0$. 

We could then consider the differences 
$$\eta_{\tau}(D):=\langle ind\, D,\tau \rangle-lim_{t\rightarrow 0}\langle \widetilde{\sigma_D},\tau_t \rangle$$
for any $D$ pseudodifferential elliptic $\gr$-operator.

For Lie groupoids (the base is a smooth manifold) these differences do not seem to be very interesting, however it would be interesting to adapt the methods and results of this paper to other kind of groupoids or higher structures, for example to Continuous families groupoids, groupoids associated to manifolds with boundary or with conical singularities (\cite{Mont,DLN}). Then probably these kind of differences could give interesting data. See \cite{MW,LP05} for related discussions.
\end{remark}


\bibliographystyle{amsalpha}
\bibliography{bibliographie}

\end{document}